\newtheorem{assumption}{Assumption}
\newtheorem{theorem}{Theorem}
\newtheorem{remark}{Remark}
\newenvironment{proof}
{\textbf{{Proof} }}
{$\blacksquare$}
\begin{document}

\begin{frontmatter}

\title{Exponential and Prescribed-Time Extremum Seeking\\ with Unbiased Convergence }

\author[tugrul]{Cemal Tugrul Yilmaz}\ead{cyilmaz@ucsd.edu}, 
\author[tugrul]{Mamadou Diagne}\ead{mdiagne@ucsd.edu}\ and
\author[tugrul]{Miroslav Krstic}\ead{krstic@ucsd.edu}

\address[tugrul]{Department of Mechanical and Aerospace Engineering, University of California, San Diego, La Jolla, CA 92093-0411, USA}

\begin{keyword}                           
Extremum seeking, exponential stability, prescribed-time stability, source-seeking problem.              
\end{keyword}                             

\begin{abstract}                          
We present multivariable extremum seeking (ES) designs that achieve unbiased convergence to the optimum. Two designs are introduced: one with exponential unbiased convergence (unbiased extremum seeker, uES) and the other with user-assignable prescribed-time unbiased convergence (unbiased PT extremum seeker, uPT-ES).  In contrast to the conventional ES, which uses persistent sinusoids and results in steady-state oscillations around the optimum,  the exponential uES employs an exponentially decaying amplitude in the perturbation signal (for achieving convergence) and an exponentially growing demodulation signal (for making the convergence unbiased). The achievement of unbiased convergence also entails employing an adaptation gain that is sufficiently large in relation to the decay rate of the perturbation amplitude. Stated concisely, the bias is eliminated by having the learning process outpace the waning of the perturbation. The other algorithm, uPT-ES, employs prescribed-time convergent/blow-up functions in place of constant amplitudes of sinusoids, and it also replaces constant-frequency sinusoids with chirp signals whose frequency grows over time. Among the convergence results in the ES literature, uPT-ES  may be the strongest yet in terms of the convergence rate (prescribed-time) and accuracy (unbiased). To enhance the robustness of uES to a time-varying optimum, exponential functions are modified to keep oscillations at steady state. Stability analysis of the designs is based on a state transformation,  averaging,  local exponential/PT stability of the averaged system, local stability of the transformed system, and local exponential/PT stability of the original system. For numerical implementation of the developed ES schemes and comparison with previous ES designs, the problem of source seeking by a two-dimensional velocity-actuated point mass is considered.
\end{abstract}

\end{frontmatter}

\section{Introduction}
\textbf{A brief historical background.} Extremum Seeking (ES), originally invented in 1922 by Maurice LeBlanc, a French industrialist and inventor \cite{leblanc1922electrification}, gained significant popularity during the mid-20th century.  However, in the 1960s, ES experienced a period of stagnation or slow growth. It wasn't until the development of stability proof in \cite{krstic2000stability} around the year 2000 that the algorithm began to regain momentum. Since then, the field of ES has witnessed remarkable theoretical advancements \cite{durr2013lie}, \cite{ghaffari2012multivariable}, \cite{oliveira2022extremum}, \cite{oliveira2016extremum}, \cite{scheinker2017model}, \cite{tan2010extremum}, and has found practical applications across various domains \cite{ghaffari2014power}, \cite{scheinker2021extremum}, \cite{zhang2007extremum}.

\subsection{Extremum seeking with time-varying perturbation}

Extremum Seeking serves as a valuable method for discovering optimal solutions in systems that are exposed to  uncertainties acting on their   input-output map, offering an alternative to adaptive control. This self-optimizing control approach facilitates the exploration of unknown maps by leveraging  known key properties\cite{draper1951principles,morosanov1957method,pervozvanskii1960continuous}. Thanks  to its model-free nature and convergence guarantees, ES has been a uniquely effective optimization technique for locating and tracking the optima of cost functions associated to static and dynamic systems.
The fundamental principle of ES is to introduce a small perturbation to the system through an excitation signal, observe the system's response, estimate the gradient by demodulating the output, and adjust the system's inputs towards the vicinity of the optima. Due to the persistent excitation present in the process, achieving exact convergence to the extremum cannot be ensured, and  instead, steady-state oscillations around the extremum are commonly observed. These consistent oscillations, which help reactivate the search algorithm in response to changes or deviations in the optimal solution,  ensure robustness with respect to possible drifts of the optima. Indeed,  achieving unbiased convergence is highly challenging  when the exploration, solely driven by the excitation mechanism, occurs in an unknown environment.  Generally speaking,   sustained oscillations can negatively affect both stable and unstable systems' performance in dynamic extremum seeking in that convergence to the system's setpoint is not possible, and at best, limit cycle behavior occurs near the setpoint \cite{scheinker2014non}. For instance, industrial-mechanical systems such as bridge and gantry cranes \cite{sorensen2007controller,d2000exponential,d1994feedback} or cable-payload systems of deep-sea construction vessels \cite{wang2022delay,wang2023delay} face significant challenges in achieving high precision positioning due to limit cycles caused by payload oscillations. These difficulties results in costly and time- and energy-consuming  manoeuvering to satisfy high precision objectives. 

In addressing the steady-state oscillation problem in classical ES, a scheme with a decaying perturbation amplitude is introduced in \cite{tan2009global}. By choosing a sufficiently large initial value of the amplitude, convergence to an arbitrarily small neighborhood of the global extremum is guaranteed  in the presence of local extrema.
This result is followed by \cite{wang2016stability}, which claims exponential and unbiased convergence to local extremum by updating the amplitude based on the system output.
However, this claim has later been proven to be incorrect in \cite{atta2019comment}.
Indeed, as clarified in \cite{atta2019comment}, under certain conditions, the algorithm of \cite{wang2016stability} enables convergence to a point on a manifold that is not necessarily extremum. In the literature, several non-smooth ES designs have been proposed to address the issue of biased convergence.  In \cite{scheinker2014non}, an instance of a non-smooth ES design is presented with the objective of reducing perturbations as the system approaches zero. Subsequently, the authors of \cite{suttner2017exponential} demonstrate exponential convergence to zero. 
In \cite{grushkovskaya2018class}, a formula for the design of ES vanishing at the extremum is provided, unifying and generalizing previous results \cite{scheinker2014non} and \cite{suttner2017exponential}. The asymptotic and exponential
convergence to zero is guaranteed in \cite{grushkovskaya2018class} under some restrictive
assumptions on the cost function. An interesting extension of the framework  in \cite{grushkovskaya2018class}  to dynamic systems can be found in \cite{grushkovskaya2021extremum}. It is important to highlight that all the results in \cite{grushkovskaya2021extremum}, \cite{grushkovskaya2018class}, \cite{scheinker2014non}, and \cite{suttner2017exponential},  are derived assuming that the exact location of the optimum point is unknown, while the corresponding value of the cost function at the optimum is known.
This restrictive assumption is removed in \cite{bhattacharjee2021extremum} and \cite{pokhrel2021control}, which achieve asymptotic convergence to a neighborhood of the extremum with vanishing oscillation by updating the amplitude based on the gradient estimate. 
In order to reduce convergence bias, \cite{lauand2022extremely} introduces filtering techniques and diminishes learning/update gains while maintaining a constant perturbation amplitude. The method in \cite{lauand2022extremely} is termed ``quasi-stochastic approximation" (QSA), wherein stochastic perturbations in stochastic approximation (SA) are replaced with periodic ones, resembling classical ES. Building on \cite{lauand2022extremely}, \cite{lauand2022markovian} further develops QSA by introducing state-dependent probing signals for improved transient performance, despite not guaranteeing unbiased convergence.
An asymptotic convergence, directly to the optimum, is achieved in \cite{abdelgalil2021lie} and \cite{suttner2019extremum} without requiring the knowledge of the cost function. In a nutshell, none of the aforementioned results, which are based on classical and  Lie brackets averaging or QSA, achieve \emph{exponential and unbiased convergence} directly to unknown extremum.

In the present contribution, we propose a new control framework termed \emph{exponential unbiased extremum seeker (uES)} to ensure \emph{exponential and unbiased convergence} by finely upgrading the perturbation signals design of classical averaging-based ES.  The proposed design approach still retains the robustness feature of classical ES design in the sense that it has the ability to perform  adaptation  when the extremum, rather than being stationary, is subject to a relatively smooth time-varying drift: steady-state oscillations are often crucial to preserve the liveliness of the optimization algorithm. Specifically,  under the occurrence of drifting optima, users are provided with flexible design parameters to fine-tune   the perturbation signal by gradually decreasing its amplitude as the output approaches the extremum, which avoids a complete cessation of the adaptation mechanism and guarantees convergence to an arbitrarily small neighborhood of the optimum. The adjustable oscillations mechanism plays a crucial role in establishing the duality between unbiased convergence and robustness. 

\subsection{Prescribed-time extremum seeking}

Recent years have seen the emergence of  ES paradigms that exhibit the remarkable capability to converge to a neighborhood of the extremum within a finite time interval. These advancements have revolutionized the traditional ES algorithm by enhancing its capabilities, leading to significant progress in the areas of finite-time stability, as explored in \cite{guay2021finite}, and fixed-time stability, as developed in \cite{poveda2021non,poveda2021fixed}. 
The key distinction between finite-time and fixed-time stability lies in that the convergence time in fixed-time stability is independent of initial conditions and has a fixed upper bound determined by the system's parameters for any initial conditions whereas the convergence time in finite-time stability is dependent on initial conditions. More demanding than the two previously invoked stability concepts, the notion of fixed-time stability in user-prescribed time, referred to as prescribed-time stability, has been more recently introduced to the control literature by \cite{song2017time} as the strongest among the existing time-constrained control design methodologies. Generally speaking, providing control designers with the ability to pre-assign a terminal time, regardless of initial conditions and system's parameters, empowers them with a substantial degree of control authority over the dynamics of the system. The implementation of prescribed-time extremum seeking (PT-ES) has been achieved by the authors in \cite{todorovski2023practical} and \cite{ctydelayheatPDETAC} using chirp signals and growing time-varying gains. Specifically, \cite{todorovski2023practical} introduces a PT-ES-based source seeking scheme for unicycles while \cite{ctydelayheatPDETAC} develops PT-ES schemes that incorporate compensation techniques for delay, diffusion partial differential equations (PDEs), and wave PDEs.
The basic principle behind prescribed-time stability of nonlinear systems in \cite{song2017time}, is that, the system is driven to its equilibrium using monotonically increasing controller gain functions that blow up at the final time. Since convergence of states occurs faster than  divergence of gains, the input signal remains bounded. However, the outputs of PT-ES designs in \cite{todorovski2023practical} and \cite{ctydelayheatPDETAC} converge to a neighborhood of the extremum while  leading to unbounded control signals. \emph{Our contribution  develops an unbiased PT-ES (uPT-ES) that  converges to the extremum without bias and ensures boundedness of control signals.}

\subsection{Contributions and organization}

The  contributions of this paper consist of   three  ES algorithms: $(i)$ exponential uES with vanishing oscillations and unbiased convergence, $(ii)$ robust exponential ES with adjustable oscillations, but less accurate convergence, $(iii)$ uPT-ES with vanishing oscillations and unbiased convergence in prescribed time. The concept of exponential uES relies on an exponential decay function that reduces the effect of the perturbation signal and the use of its multiplicative inverse, which grows exponentially, to maximize the effect of the demodulation signal multiplied by the high-pass filtered output. Similar to the prescribed-time stabilization concept presented in \cite{song2017time}, the convergence of the filtered output occurs at a faster rate than the divergence of the inverse function and the convergence of the perturbation, keeping the controller bounded. 
For the stability analysis, we transform the system using the exponentially growing function and then apply classical averaging and singular perturbation methods to show the local stability of the transformed system, which in turn implies the local exponential stability of the original system as well as exponential convergence of the output to the extremum with proper choice of  gains. We introduce the robust exponential ES, which is our second algorithm, by modifying the exponential decay function so that it converges to a small value arbitrarily defined.  Our third and final algorithm, uPT-ES, replaces the exponential decay function with a prescribed-time convergent function and employs chirp signals as perturbation/demodulation signals that grow in frequency over time and ultimately diverge to infinity at the terminal. The prescribed-time stability of the closed-loop system is achieved by using a time-dilation transformation, a state transformation, a classical averaging method, and a time-contraction transformation. 

We evaluate our three ES designs numerically by studying the source seeking problem with a two-dimensional velocity actuated point mass, which has been previously solved with a traditional ES in \cite{zhang2007extremum}. \emph{A discovery that stands as a common ground of the proposed unbiased algorithms is  a duality between learning and unbiased convergence: learning must occur at a rate that surpasses the rate of decay of the waning oscillations.}

The conference version of this paper \cite{ctycdcpaper} introduces the exponential uES for static and dynamic maps, which form Section \ref{expExSect} and \ref{expesdynamicsec} of the current paper. This journal paper builds upon that work and introduces two additional contributions. Section \ref{PTESSec} explores the concept of uPT-ES while Section \ref{robustexposection} focuses on the robust exponential ES. The subsequent sections of the paper are organized as follows: Section \ref{sourceseeking} presents our approaches to solving the source seeking problem. The numerical results are presented and analyzed in Section \ref{applicationsection}. The paper concludes with Section \ref{concsection}.

\section{Exponential Unbiased Extremum Seeker for Static Maps} \label{expExSect}

We consider the following optimization problem
\begin{align}
    \max_{\theta \in \mathbb{R}^n} h(\theta),
\end{align}
where $\theta \in \mathbb{R}^n$ is the input, $h \in \mathbb{R}^n \to \mathbb{R}$
is an unknown smooth function. We make the following assumption regarding the unknown static map $h(\cdot)$: 
\begin{assumption} \label{assconvex}
The function $h$ is $\mathcal{C}^4$, and there exists $\theta^{*} \in \mathbb{R}^n$ such that
\begin{align}
    \frac{\partial}{\partial \theta} h(\theta^{*})={}&0, \\
    \frac{\partial^2}{\partial \theta^2} h(\theta^{*})={}&H<0, \quad H=H^T.
\end{align}
\end{assumption}
Assumption \ref{assconvex} guarantees the existence of a maximum of the function $h(\theta)$ at $\theta=\theta^*$. We measure the unknown function $h(\theta)$ in real time as follows
\begin{align}
    y(t)={}&h(\theta(t)), \qquad t \in [t_0,\infty),
\end{align}
in which $y \in \mathbb{R}$ is the output.
Our aim is to design an ES algorithm using output feedback $y(t)$ in order to achieve exponential convergence of $\theta$ to $\theta^{*}$ while simultaneously maximizing the steady state value of $y$, without requiring prior knowledge of either $\theta^*$ or the function $h(\cdot)$. 
\begin{figure}[t]
    \centering
    \includegraphics[width=.9\columnwidth]{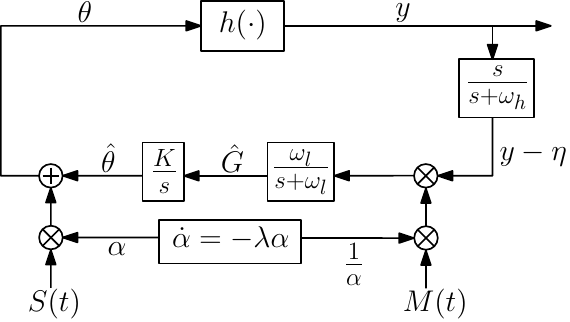} 
    \caption{ Exponential uES scheme. The design uses an exponential decay function $\alpha$ to gradually reduce the effect of the perturbation signal $S(t)$ and its multiplicative inverse $\frac{1}{\alpha}$ to gradually increase the effect of the demodulation signal $M(t)$.}
    \label{ESBlock}
\end{figure}
Our exponential uES design for static maps is schematically illustrated in Fig. \ref{ESBlock}, where $K$ is an $n \times n$ positive diagonal matrix, the filter coefficients $\omega_h$ and $\omega_l$ are positive real numbers, the perturbation and demodulation signals are defined as
\begin{align}
    S(t)=& \begin{bmatrix} a_1 \sin(\omega_1 t)&  & \cdots & & a_n \sin(\omega_n t) \end{bmatrix}^T, \label{St} \\
    M(t)=&\begin{bmatrix} \frac{2}{a_1} \sin(\omega_1 t)&  & \cdots & & \frac{2}{a_n} \sin(\omega_n t) \end{bmatrix}^T, \label{Mt}
\end{align}
respectively and the exponential decay function $\alpha$ is governed by
\begin{align}
    \dot{\alpha}(t)=&{}-\lambda \alpha(t), \qquad \alpha(t_0)=\alpha_0. \label{mu}
\end{align}
The parameters $\alpha_0, \lambda$ are positive real numbers, the amplitudes $a_i$ are real numbers, $\omega_i / \omega_j$ are rational and the frequencies are chosen such that $\omega_i \neq \omega_j$ and $\omega_i + \omega_j \neq \omega_k$  for distinct $i, j$ and $k$. 
We select  the probing frequencies $\omega_i$'s as follows
\begin{align}
    \omega_i={}&\omega {\omega}_i^{\prime}, \qquad i \in \{1, 2, \dots, n \}, \label{param1} 
\end{align}
where $\omega$ is a positive constant and and $\omega_i^{\prime}$ is a rational number.
In addition, the parameters should satisfy the following conditions:
\begin{align}
    {\lambda} <{}& \frac{{\omega_l}}{2}, \frac{{\omega}_h}{2}, \label{cond1} \\
    K >{}& (\omega_l - \lambda) \frac{\lambda}{\omega_l}  \left( \frac{1}{-H} \right) >0.  \label{cond2}
\end{align}
Note that if $K > \frac{\lambda}{-2H}$, stability is achieved for all admissible $\lambda$ (not exceeding $\omega_l/2$). The algorithm can be used without the low-pass filter, in which case these conditions become, taking the limit $\omega_l \rightarrow\infty$, 
\begin{align}
\lambda <{}& \frac{\omega_h}{2}, \\
K >{}& \frac{\lambda}{-H} >0. 
\end{align}
The interpretation of the conditions is that perturbation amplitude $\alpha$ and the demodulation amplitude $1/\alpha$ can decay and grow, respectively, but not too fast, while the estimate $\hat{\theta}$ needs to be updated fast enough, for the given rate of decay/growth of the amplitudes. Learning needs to outpace the waning of the perturbation.

We summarize closed-loop system depicted in Fig. \ref{ESBlock} as follows
\begin{align}
    \frac{d}{d t}\begin{bmatrix}
    \tilde{\theta} \\ \hat{G} \\ \tilde{\eta} \\ \alpha
    \end{bmatrix}=\begin{bmatrix}
        K \hat{G} \\ -\omega_l \hat{G}+\omega_l (y-h(\theta^*)-\tilde{\eta})\frac{1}{\alpha} M(t) \\
        -\omega_h \tilde{\eta}+\omega_h (y-h(\theta^*)) \\
        -\lambda \alpha
    \end{bmatrix}, \label{ffed}
\end{align}
in view of the transformations
\begin{align}
    \tilde{\theta}={}&\hat{\theta}-\theta^*, \label{transf1} \\
    \tilde{\eta}={}&\eta-h(\theta^*), \label{transf2}
\end{align}
where $\eta$ is governed by
\begin{align}
    \dot{\eta}=-\omega_h \eta+\omega_h y. \label{etadef}
\end{align}

The convergence result is stated in the following theorem.
\begin{theorem} \label{the1}
Consider the feedback system \eqref{ffed} with the parameters 
that satisfy \eqref{cond1}, \eqref{cond2} under Assumption \ref{assconvex}. There exists $\bar{\omega}$ and for any $\omega > \bar{\omega}$ there exists an open ball $\mathcal{B}$ centered at the point $(\hat{\theta}, \hat{G}, \eta, \alpha)=(\theta^*, 0, h(\theta^*),0) = : \Upsilon$ such that for any initial condition starting in the ball $\mathcal{B}$, the system \eqref{ffed} has a unique solution and the solution converges exponentially to $\Upsilon$.
Furthermore, $y(t)$ exponentially converges to $h(\theta^*)$.
\end{theorem}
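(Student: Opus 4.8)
The central difficulty is that the closed loop \eqref{ffed} is \emph{not} in the standard form required by classical averaging: the demodulation carries the factor $1/\alpha$, which grows without bound as $\alpha(t)=\alpha_0 e^{-\lambda(t-t_0)}\to 0$, so the right-hand side is neither bounded nor periodic in the fast time. My plan is therefore to first remove this growth by a state transformation built from the exponentially growing function $1/\alpha$, reducing \eqref{ffed} to a system with bounded, near-periodic coefficients to which classical averaging and singular-perturbation arguments apply; then to establish local exponential stability of the associated averaged system under \eqref{cond1}--\eqref{cond2}; and finally to transfer this conclusion back, first to the transformed time-varying system via the averaging theorem (valid for $\omega$ large, which is what furnishes $\bar\omega$), and then to the original coordinates.

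Concretely, I would rescale the error states that are compared against the growing demodulation by $1/\alpha$ (equivalently, work with $\tilde\theta/\alpha$, $\hat G/\alpha$, $\tilde\eta/\alpha$). Since $\frac{d}{dt}\alpha^{-1}=\lambda\alpha^{-1}$, each such rescaling contributes an extra destabilizing $+\lambda$ term to the transformed dynamics --- this is the analytical origin of the slogan that learning must outpace the waning of the perturbation. A key structural fact to exploit is that the washout state $\tilde\eta$, which tracks the mean of $y-h(\theta^*)$, cancels the constant (quadratic-in-$\tilde\theta$) component of the output error, so that $(y-h(\theta^*)-\tilde\eta)\tfrac1\alpha M(t)$ is genuinely $O(1)$ rather than $O(1/\alpha)$ once that DC bias is subtracted; this is what keeps the transformed right-hand side bounded in a neighborhood of $\Upsilon$.

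Next I would Taylor-expand $y-h(\theta^*)=\tfrac12(\tilde\theta+\alpha S)^{T} H(\tilde\theta+\alpha S)+O(|\tilde\theta+\alpha S|^3)$ using Assumption \ref{assconvex} ($\nabla h(\theta^*)=0$, $H<0$), and separate the resonant term from the zero-mean and higher-order ones. Averaging over the fast oscillations, with the time-average $\langle M S^{T}\rangle=I_n$ guaranteed by the frequency conditions $\omega_i\neq\omega_j$, $\omega_i+\omega_j\neq\omega_k$ and rational ratios, the averaged linearization decouples into the washout filter $\dot{\tilde\eta}_a=-\omega_h\tilde\eta_a$ and the gradient loop
\begin{align}
\frac{d}{dt}\begin{bmatrix}\tilde\theta_a\\ \hat G_a\end{bmatrix}
=\begin{bmatrix}0 & K\\ \omega_l H & -\omega_l I\end{bmatrix}\begin{bmatrix}\tilde\theta_a\\ \hat G_a\end{bmatrix},\nonumber
\end{align}
shifted by the $+\lambda$ terms coming from the transformation. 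For the untransformed loop this matrix is Hurwitz for every $H<0$, $K>0$ (for each Hessian mode the $2\times2$ block has characteristic polynomial $s^2+\omega_l s+\omega_l K(-H)$ with positive coefficients); the real work is to verify that it remains Hurwitz after the $+\lambda$ shift, and this is exactly where \eqref{cond1}--\eqref{cond2} enter: they force the decay rate of the gradient loop and of the washout to strictly exceed the growth rate induced by $1/\alpha$, i.e. $\lambda<\omega_h/2,\omega_l/2$ and $K$ sufficiently large relative to $\lambda/(-H)$. I would then conclude local exponential stability of the averaged system.

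Finally, I would invoke the averaging theorem in the form guaranteeing that local exponential stability of the averaged system implies, for $\omega>\bar\omega$, local exponential stability of the time-varying transformed system; this simultaneously yields existence and uniqueness of the solution starting in a ball $\mathcal B$. Inverting the bounded transformation transfers exponential convergence to $\Upsilon$ in the original coordinates, with $\alpha\to0$ read off directly from \eqref{mu}, and the output claim follows from $\theta-\theta^*=\tilde\theta+\alpha S\to0$ exponentially together with continuity of $h$, so that $y=h(\theta)\to h(\theta^*)$ exponentially. \textbf{The main obstacle} I anticipate is precisely the unbounded $1/\alpha$: proving that the transformed coefficients stay bounded is entangled with the stability one is trying to establish, since boundedness of $\tilde\theta/\alpha$ and $\tilde\eta/\alpha$ presupposes that the states decay faster than $\alpha$, which is itself a consequence of stability. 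Resolving this non-circularly --- by restricting to a small ball, using the local exponential estimate produced by the averaging theorem, and closing the bootstrap with the a priori known exponential decay of $\alpha$ --- is the delicate technical heart of the argument, and it is exactly what conditions \eqref{cond1}--\eqref{cond2} are engineered to make possible.
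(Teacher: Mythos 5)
Your overall strategy matches the paper's proof (state transformation by the growing function, verification of averaging hypotheses, Hurwitz check of the averaged Jacobian under \eqref{cond1}--\eqref{cond2}, transfer back via the averaging theorem), but there is a genuine gap in the transformation itself: you rescale $\tilde\eta$ by $1/\alpha$, whereas the correct scaling is $\tilde\eta_f=\tilde\eta/\alpha^2$. The reason is that in the transformed coordinates $y-h(\theta^*)=\nu(\alpha(\tilde\theta_f+S))=\mathcal{O}(\alpha^2)$, since $\nu$ vanishes to second order at zero by Assumption \ref{assconvex}; hence $\tilde\eta$ decays like $\alpha^2$, not like $\alpha$. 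With your scaling, the $\hat G_f$ equation contains the term
\begin{align}
\omega_l\Bigl[\frac{\nu(\alpha(\tilde\theta_f+S))}{\alpha^2}-\frac{\tilde\eta_f}{\alpha}\Bigr]M(t),\nonumber
\end{align}
whose second summand is singular at $\alpha=0$: on any compact neighborhood of the target point (which necessarily contains points with $\tilde\eta_f$ fixed and $\alpha$ arbitrarily small, since $\alpha$ is a state converging to zero) the vector field is unbounded, so the continuity/boundedness hypotheses of the averaging theorem fail exactly at the equilibrium you need. With the paper's scaling the same term reads $\omega_l[\nu/\alpha^2-\tilde\eta_f]M$, and the only apparent singularity $\nu/\alpha^2$ is removable via Taylor's theorem with integral remainder plus the mean value theorem (this is where the $\mathcal{C}^4$ hypothesis is used, to bound derivatives of the vector field up to second order). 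This also dissolves the ``bootstrap circularity'' you flag as the delicate heart of the argument: no a priori decay estimate on the states is needed, because after the correct scaling the transformed system is smooth and bounded on compact sets and classical averaging applies directly.

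The wrong scaling also corrupts the stability accounting. Rescaling $\tilde\eta$ by $1/\alpha^2$ contributes a shift of $+2\lambda$ (not $+\lambda$) to the washout subsystem, giving the averaged coefficient $(2\lambda-\omega_h)$ and hence precisely the condition $\lambda<\omega_h/2$ in \eqref{cond1}; with $\tilde\eta/\alpha$ you would derive only $\lambda<\omega_h$, so the condition you quote cannot emerge from your construction. Two further corrections to your averaged picture: the washout does not decouple to $\dot{\tilde\eta}_a=-\omega_h\tilde\eta_a$ with equilibrium zero --- the averaged equilibrium is $\tilde\eta_f^{a,e}=\frac{\omega_h}{4(\omega_h-2\lambda)}\sum_{i=1}^n H_{i,i}a_i^2\neq 0$, and the Jacobian is block-upper-triangular with the $(\tilde\theta_f,\hat G_f)$ block $\bigl[\begin{smallmatrix}\lambda I & K\\ \omega_l H & (\lambda-\omega_l)I\end{smallmatrix}\bigr]$, whose trace and determinant conditions are exactly $\lambda<\omega_l/2$ and \eqref{cond2}. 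Also, your claimed cancellation ``$(y-h(\theta^*)-\tilde\eta)\frac1\alpha M$ is $O(1)$ once the DC bias is subtracted'' is not what keeps the right-hand side bounded --- $\tilde\eta$ only tracks the DC component asymptotically, so boundedness cannot be premised on it; it is the $\alpha^2$ scaling that makes each term separately $\mathcal{O}(1)$. Finally, the singular-perturbation machinery you invoke is not needed for this static-map theorem (it is used in the paper only for the dynamic-system extension).
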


Before presenting the proof, we need to emphasize that the system \eqref{ffed} does not exhibit an equilibrium point in the conventional sense. Theorem \ref{the1} establishes the convergence of the system to a point $\Upsilon$ within the state space. This point, $\Upsilon$, is not an asymptotically or exponentially stable equilibrium; rather, it represents a specific point in the state space towards which the system converges.

\begin{proof} Let us proceed through the proof step by step.

\textbf{Step 1: State transformation.}
Consider the following transformations
\begin{align}
     \tilde{\theta}_f=&{} \frac{1}{\alpha} \tilde{\theta}, \qquad
     \hat{G}_f={} \frac{1}{\alpha} \hat{G},  \qquad 
     \tilde{\eta}_f={} \frac{1}{\alpha^2} \tilde{\eta}, \label{trans}
\end{align}
which transform \eqref{ffed} to the following system
\begin{align}
    &\frac{d}{d t}\begin{bmatrix}
    \tilde{\theta}_f & \hat{G}_f & \tilde{\eta}_f & \alpha
    \end{bmatrix}^T \nonumber \\
    &=\begin{bmatrix}
        \lambda \tilde{\theta}_f+K \hat{G}_f \\ (\lambda-{\omega}_l)\hat{G}_f +{\omega}_l\left[\nu(\tilde{\theta}_f \alpha+{S}(t)\alpha)-\tilde{\eta}_f \alpha^2 \right] \frac{{M}(t)}{\alpha^2}  \\
        (2\lambda-{\omega}_h) \tilde{\eta}_f+{\omega}_h \frac{1}{\alpha^2} \nu(\tilde{\theta}_f \alpha+{S}(t)\alpha) \\
        -{\lambda} \alpha
    \end{bmatrix}, \label{fedtautran}
\end{align}
where
\begin{align}
    \nu(z)=h(\theta^*+z)-h(\theta^*) \label{nudefined}
\end{align}
with $z=\tilde{\theta}_f \alpha+{S}(t)\alpha$ in view of $\theta=\hat{\theta}+S(t)\alpha$ and \eqref{transf1}. From Assumption \ref{assconvex}, we get
\begin{align}
    \nu(0)=0, \quad \frac{\partial }{\partial z}\nu(0)=0, \quad \frac{\partial^2 }{\partial z^2}\nu(0)=H<0. \label{nupartials}
\end{align}


\textbf{Step 2: Verification of the feasibility of \eqref{fedtautran} for averaging.}
We rewrite the system \eqref{fedtautran} in the time scale $\tau=\omega t$ as follows
\begin{align}
    &\frac{d}{d \tau}\begin{bmatrix}
    \tilde{\theta}_f & \hat{G}_f & \tilde{\eta}_f & \alpha
    \end{bmatrix}^T \nonumber \\
    &=\frac{1}{\omega} \begin{bmatrix}
        {\lambda} \tilde{\theta}_f+{K} \hat{G}_f \\ ({\lambda}-{\omega}_l) \hat{G}_f +{\omega}_l \left[\nu(\tilde{\theta}_f \alpha+\bar{S}(\tau)\alpha)-\tilde{\eta}_f \alpha^2 \right] \frac{\bar{M}(\tau)}{\alpha^2}  \\
        (2{\lambda}-{\omega}_h) \tilde{\eta}_f +{\omega}_h \frac{1}{\alpha^2} \nu(\tilde{\theta}_f \alpha+\bar{S}(\tau)\alpha) \\
        -{\lambda} \alpha
    \end{bmatrix},  \label{systautrans}
\end{align}
where $\bar{S}(\tau)=S(\tau/\omega), \bar{M}(\tau)=M(\tau/\omega)$.
Let us write the system \eqref{systautrans} in compact form as 
\begin{align}
    \frac{d{\zeta}_f}{d \tau}={}(1/\omega) \mathcal{F}(\tau,\zeta_f),
\end{align}
where $\zeta_f=\begin{bmatrix} \tilde{\theta}_f & \hat{G}_f & \tilde{\eta}_f & \alpha \end{bmatrix}^T$.
For the application of the averaging theorem in \cite{khalil2002nonlinear},
we need to show that $\mathcal{F}(\tau,\zeta_f)$ and its partial derivatives with respect to $\zeta_f$ up to the second order on compact sets of $\zeta_f$ for all $\tau \geq \omega t_0$ are continuous and bounded.
The proof is trivial for $\mathcal{F}(\tau,\zeta_f)$ excluding the term $\nu(\alpha \tilde{\theta}_f+\alpha \bar{S}(\tau))\frac{1}{\alpha^2}$. To complete the proof, we utilize Taylor's theorem to write
\begin{align}
    \nu(z)={}& \sum_{i=1}^n \sum_{j=1}^n  z_i z_j  \int_0^1 (1-s) \frac{\partial^2 \nu}{\partial z_i \partial z_j}(sz) ds \label{taylortheorem}
\end{align}
in view of \eqref{nupartials}. By substituting $z=\alpha\tilde{\theta}_f+\alpha \bar{S}(\tau)$ into \eqref{taylortheorem} and multiplying both sides by $\frac{1}{\alpha^2}$, we obtain
\begin{align}
\frac{1}{\alpha^2}\nu(&\tilde{\theta}_f\alpha+\bar{S}(\tau)\alpha)={}  \sum_{i=1}^n \sum_{j=1}^n  (\tilde{\theta}_{f_i}+a_i \sin(\omega_i^{\prime} \tau))(\tilde{\theta}_{f_j} +a_j \nonumber \\
&\times \sin(\omega_j^{\prime}\tau))\int_0^1 (1-s) \frac{\partial^2 \nu}{\partial z_i \partial z_j}\left(s \tilde{\theta}_f \alpha+s\bar{S}(\tau)\alpha\right) ds. \label{taylor2}
\end{align}
Next, we apply the mean value theorem to obtain
\begin{align}
\frac{1}{\alpha^2}\nu(\tilde{\theta}_f\alpha&+\bar{S}( \tau)\alpha)={}\frac{1}{2}  \sum_{i=1}^n \sum_{j=1}^n  (\tilde{\theta}_{f_i}+a_i \sin(\omega_i^{\prime}\tau))(\tilde{\theta}_{f_j}  \nonumber \\
&+a_j\sin(\omega_j^{\prime}\tau))\frac{\partial^2 \nu}{\partial z_i \partial z_j}
\left(\mathfrak{s} \tilde{\theta}_f \alpha+\mathfrak{s}\bar{S}(\tau)\alpha\right) \label{aftermvt}
\end{align}
for some $\mathfrak{s} \in [0,1]$. By Assumption \ref{assconvex}, \eqref{aftermvt} is continuous and bounded on compact sets of $\tilde{\theta}_f$ and $\alpha$. Considering the $\mathcal{C}^4$ property of $\nu$ and using the mean value theorem, we prove the continuity and boundedness of the partial derivatives of \eqref{taylor2} with respect to $\tilde{\theta}_f$ and $\alpha$ up to the second order on compact sets of $\tilde{\theta}_f$ and $\alpha$. Therefore, $\mathcal{F}(\tau,\zeta_f)$ satisfies the continuity and boundedness assumptions of the averaging theorem in \cite{khalil2002nonlinear}.

\textbf{Step 3: Averaging operation.} 
Let us define the common period of the probing frequencies as follows
\begin{align}
    \Pi=2\pi \times \text{LCM}\left\{\frac{1}{\omega_i}\right\}, \qquad i \in \{1,2\dots,n\}, \label{Pidef}
\end{align}
where LCM stands for the least common multiple. The average of the system \eqref{systautrans} over the period $\Pi$ is given by
\begin{align}
    \frac{d}{d \tau}\begin{bmatrix}
    \tilde{\theta}_f^a \\ \hat{G}_f^a  \\ \tilde{\eta}_f^a 
    \\ \alpha^a
    \end{bmatrix}^T={}&\frac{1}{\omega} \begin{bmatrix}
        {\lambda} \tilde{\theta}_f^a+{K} \hat{G}_f^a \\ ({\lambda}-{\omega}_l) \hat{G}_f^a \\
        (2{\lambda}-{\omega}_h) \tilde{\eta}_f^a  \\
        -{\lambda} \alpha^a
    \end{bmatrix}  \nonumber \\
    +& \frac{1}{\omega}  \begin{bmatrix} 0 \\ {\omega}_l \frac{1}{\Pi} \int_{0}^{\Pi} \nu(\tilde{\theta}_f^a \alpha^a+\bar{S}(\sigma)\alpha^a)  \frac{\bar{M}(\sigma)}{(\alpha^a)^2} d\sigma \\ {\omega}_h \frac{1}{\Pi} \int_{0}^{\Pi}  \nu(\tilde{\theta}_f^a \alpha^a+\bar{S}(\sigma)\alpha^a) \frac{1}{(\alpha^a)^2} d\sigma \\ 0 \end{bmatrix}, \label{averagesys}
\end{align}
where $\tilde{\theta}_f^a, \hat{G}_f^a, \tilde{\eta}_f^a$ and $\alpha^a$ denote the
average versions of the states $\tilde{\theta}_f, \hat{G}_f, \tilde{\eta}_f$ and $\alpha$, respectively. It follows from \eqref{averagesys} that the average equilibrium denoted as $\begin{bmatrix}
\tilde{\theta}_f^{a,e} & \hat{G}_f^{a,e} & \tilde{\eta}_f^{a,e} & \alpha^{a,e}
\end{bmatrix}^T$ satisfies
\begin{align}
    &{\lambda} \tilde{\theta}_f^{a,e}={}-{K} \hat{G}_f^{a,e}, \nonumber \\
    &\alpha^{a,e}={}0, \nonumber \\
    &({\omega}_l-{\lambda}) \hat{G}_f^{a,e} \nonumber \\
    &\quad ={}\lim_{\alpha^{a,e} \to 0} \left[ \frac{{\omega}_l}{\Pi} \int_0^{\Pi} \nu(\tilde{\theta}_f^{a,e} \alpha^{a,e}+\bar{S}(\sigma)\alpha^{a,e})  \frac{\bar{M}(\sigma)}{(\alpha^{a,e})^2} d\sigma \right], \nonumber \\
    &({\omega}_h-2{\lambda}) \tilde{\eta}_f^{a,e} \nonumber \\
    &\quad ={}\lim_{\alpha^{a,e} \to 0} \left[ \frac{{\omega}_h}{\Pi} \int_0^{\Pi} \nu(\tilde{\theta}_f^{a,e} \alpha^{a,e}+\bar{S}(\sigma)\alpha^{a,e})  \frac{1}{(\alpha^{a,e})^2} d\sigma \right].
\end{align}
By performing a Taylor series approximation of $\nu$ in view of \eqref{nupartials} as follows
\begin{align}
    \nu(z)={}&\frac{1}{2} \sum_{i=1}^n \sum_{j=1}^n \frac{\partial^2 \nu }{\partial z_i \partial z_j}(0) z_i z_j \nonumber \\
    &+\frac{1}{3! } \sum_{i=1}^n \sum_{j=1}^n \sum_{k=1}^n  \frac{\partial^3 \nu }{\partial z_i \partial z_j \partial z_k}(0) z_i z_j z_k+\mathcal{O}(|z|^4) \label{nutaylor}
\end{align}
with $z=\tilde{\theta}_f^{a,e} \alpha^{a,e}+\bar{S}(\sigma)\alpha^{a,e}$, we compute
\begin{align}
   &\lim_{\alpha^{a,e} \to 0} \left[ \frac{1}{\Pi} \int_0^{\Pi} \nu(\tilde{\theta}_f^{a,e} \alpha^{a,e}+\bar{S}(\sigma)\alpha^{a,e})  \frac{\bar{M}(\sigma)}{(\alpha^{a,e})^2} d\sigma \right] \nonumber \\ 
   &={}\lim_{\alpha^{a,e} \to 0} \Bigg[ \frac{1}{\Pi} \int_0^{\Pi} \frac{1}{2}  \sum_{i=1}^n \sum_{j=1}^n \frac{\partial^2 \nu }{\partial z_i \partial z_j}(0)  (\tilde{\theta}_{f_i}^{a,e}+a_i \sin(\omega^{\prime}_i\sigma)) \nonumber \\
   & \times (\tilde{\theta}_{f_j}^{a,e} +a_j \sin(\omega^{\prime}_j \sigma)) (\alpha^{a,e})^2 \frac{\bar{M}(\sigma)}{(\alpha^{a,e})^2} d\sigma+\frac{(\alpha^{a,e})^3}{(\alpha^{a,e})^2} \mathcal{O}(|a|^2) \Bigg], \nonumber \\
   &={}H\tilde{\theta}_f^{a,e},  
\end{align}
and
\begin{align}
     &\lim_{\alpha^{a,e} \to 0} \left[ \frac{1}{\Pi} \int_0^{\Pi} \nu(\tilde{\theta}_f^{a,e} \alpha^{a,e}+\bar{S}(\sigma)\alpha^{a,e})  \frac{1}{(\alpha^{a,e})^2} d\sigma \right] \nonumber \\
     &\hspace{2cm}={}\frac{1}{2} \sum_{i=1}^n \sum_{j=1}^n H_{i,j} \tilde{\theta}_{f_i}^{a,e} \tilde{\theta}_{f_j}^{a,e}+  \frac{1}{4} \sum_{i=1}^n H_{i,i} a_i^2, 
\end{align}
by L'Hospital's rule, where $H_{i,j}=\frac{\partial^2 \nu }{\partial z_i \partial z_j}(0)$ and $\tilde{\theta}_{f_i}^{a,e}$ is the $i$th element of $\tilde{\theta}_{f}^{a,e}$. Then, we obtain the equilibrium of the average system \eqref{averagesys} as
\begin{align}
&\begin{bmatrix}
    \tilde{\theta}_f^{a,e} & \hat{G}_f^{a,e} & \tilde{\eta}_f^{a,e} & \alpha^{a,e}
\end{bmatrix}^T \nonumber \\
    &\qquad =\begin{bmatrix}
    0_{1 \times n} & 0_{1 \times n} & \frac{{\omega}_h}{4({\omega}_h-2{\lambda})} \sum_{i=1}^n H_{i,i} a_i^2 & 0
\end{bmatrix}^T, \label{aveq}
\end{align}
provided that $ {\omega_l}\neq{}{\lambda}, {\omega}_h\neq{}2{\lambda}$ and $K \neq{} {\lambda}({\lambda}-{\omega}_l)  {{\omega}}_l^{-1}H^{-1}$.

\begin{figure*}[ht]
    \centering
    \includegraphics[width=1.5\columnwidth]{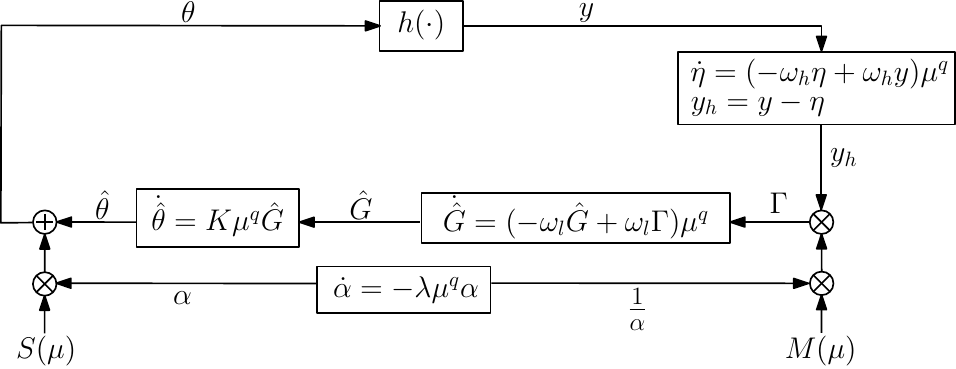}
    \caption{uPT-ES scheme. The design modifies the exponential uES in Fig. \ref{ESBlock} by incorporating $\mu^q$, with $q \geq 1$, into all system dynamics and using hyperbolic chirps in the perturbation and demodulation signals.}
    \label{PTESBlock}
\end{figure*}

\textbf{Step 4: Stability analysis.}
The Jacobian of the average system \eqref{averagesys} at the equilibrium \eqref{aveq} is given by
\begin{align}
    &J_f^a \nonumber \\
    =&\frac{1}{\omega} \begin{bmatrix} {\lambda}I_{n \times n} & {K} & 0_{n \times 1} & 0_{n \times 1}\\ {\omega}_l  H & ({\lambda}-{\omega}_l)I_{n \times n} & 0_{n \times 1} & \frac{{\omega}_l}{\Pi}  \int_0^{\Pi} \frac{\partial \left( \frac{\nu\bar{M}}{(\alpha^{a})^2}\right)}{\partial \alpha^a}  d\sigma 
    \\ 0_{1 \times n} & 0_{1 \times n} & (2{\lambda}-{\omega}_h) & \frac{{\omega}_h}{\Pi}  \int_0^{\Pi} \frac{\partial \left( \frac{\nu}{(\alpha^{a})^2}\right)}{\partial \alpha^a}  d\sigma  \\ 0_{1 \times n} & 0_{1 \times n} & 0 & -{\lambda} \end{bmatrix}.
\end{align}
Note that $J_f^a$ is block-upper-triangular and Hurwitz provided that \eqref{cond1} and \eqref{cond2} are satisfied.
This proves the local exponential stability of the average system \eqref{averagesys}. Then, based on the averaging theorem \cite{khalil2002nonlinear}, we show that there exists $\bar{\omega}$ and for any $\omega > \bar{\omega}$, the system \eqref{systautrans} has a unique exponentially stable periodic solution $(\tilde{\theta}_f^{\Pi}(\tau), \hat{G}_f^{\Pi}(\tau), \tilde{\eta}_f^{\Pi}(\tau), \alpha^{\Pi}(\tau))$ of period $\Pi$ and this solution satisfies
\begin{align}
    \left| \begin{bmatrix} \tilde{\theta}_f^{\Pi}(\tau) \\ \hat{G}_f^{\Pi} (\tau)  \\ \tilde{\eta}_f^{\Pi}(\tau)-\frac{{\omega}_h}{4({\omega}_h-2{\lambda})} \sum_{i=1}^n H_{i,i} a_i^2  \\ \alpha^{\Pi}(\tau)  \end{bmatrix} \right| \leq \mathcal{O}\left( \frac{1}{\omega} \right).
\end{align}
In other words, all solutions $(\tilde{\theta}_f(\tau), \hat{G}_f(\tau),  \tilde{\eta}_f(\tau), \alpha(\tau))$ exponentially converge to an $\mathcal{O}\left( 1/\omega \right)$-neighborhood of the origin. The signal $\alpha(\tau)$, in particular, exponentially converges to zero. Noting this fact and recalling the transformations \eqref{trans}, we can deduce that the system \eqref{ffed} with states $\tilde{\theta}(t), \hat{G}(t), \tilde{\eta}(t)$ has a unique solution and is exponentially stable at the origin. 
In particular, based on \eqref{trans}, both $\tilde{\theta}(t)$ and $\hat{G}(t)$ exhibit exponential convergence to zero at a rate of $\lambda$, while $\tilde{\eta}(t)$ converges to zero exponentially with a rate of $2\lambda$.

\textbf{Step 5: Convergence to extremum.} Considering the results in Step 4 and recalling from \eqref{trans} and Fig. \ref{ESBlock} that 
\begin{align}
    \theta(t)={}&\alpha(t) \tilde{\theta}_f(t)+\theta^*+\alpha(t) S(t),
\end{align}
we conclude the exponential convergence of $\theta(t)$ to $\theta^*$  at the rate of $\lambda$. Taking into account the boundedness of $\tilde{\theta}_f(t)$ for all $t \geq t_0$, we deduce from \eqref{nudefined} and \eqref{aftermvt} the exponential convergence of $y(t)=h(\theta(t))$ to $h(\theta^*)$ at the rate of $2\lambda$. This completes the proof of Theorem \ref{the1}. 
\hfill \end{proof}

According to Theorem \ref{the1}, the initial value of the decaying function $\alpha(t)$ should be sufficiently close to zero, which limits the range of possible values for $\alpha_0$ in \eqref{mu}. This can also be interpreted as the requirement for the amplitudes to be small enough in traditional extremum-seeking approaches.

\section{Unbiased Prescribed-Time Extremum Seeker for Static Maps} \label{PTESSec}

In this section, our aim is to design an ES algorithm for static maps, which guarantees the unbiased convergence of $\theta$ to $\theta^{*}$ within a terminal time $T$, where the time $T$ is prescribed by user a priori. 
Our uPT-ES design is schematically illustrated in Fig. \ref{PTESBlock}. Different from the exponentially convergent $\alpha$-dynamics \eqref{mu}, we define a prescribed-time convergent $\alpha$-dynamics as follows
\begin{align}
    \dot{\alpha}(t)=&{}-\lambda \mu^q(t-t_0) \alpha(t), \qquad \alpha(t_0)=\alpha_0 \label{mudot}
\end{align}
with $q \geq 1$ and the following smooth function
\begin{align}
    \mu(t-t_0)={}\frac{T}{T+t_0-t}, \quad t \in [t_0,t_0+T). \label{xidef}
\end{align}
The solution of \eqref{mudot} is given by
\begin{align}
    \alpha(t)={}&\alpha(t_0) e^{-\lambda\int_{t_0}^t \mu^q(\sigma-t_0)d\sigma} \nonumber \\
    ={}&\begin{cases}
    \alpha_0 \mu^{-\lambda T}(t-t_0),& \text{if } q= 1,\\
    \alpha_0 e^{- \frac{\lambda T}{q-1} \left(\mu^{q-1}(t-t_0)-1\right)},  & \text{if } q> 1 
\end{cases} \label{mudef}
\end{align}
with the property that $\alpha(T)=0$. Note that the growth of the $\mu$-signal in the second condition of \eqref{mudef} increases as $q$ is increased beyond $1$, resulting in a faster decay of the $\alpha$-signal. 

We summarize closed-loop system depicted in Fig. \ref{PTESBlock} as follows
\begin{align}
    \frac{d}{d t}\begin{bmatrix}
    \tilde{\theta} \\ \hat{G} \\ \tilde{\eta} \\ \alpha
    \end{bmatrix}=\begin{bmatrix}
        K \mu^q \hat{G} \\ -\omega_l \mu^q \hat{G}+\omega_l \mu^q (y-h(\theta^*)-\tilde{\eta}) \frac{1}{\alpha} M(\mu) \\
        -\omega_h \mu^q \tilde{\eta}+\omega_h \mu^q (y-h(\theta^*)) \\
        -\lambda \mu^q \alpha
    \end{bmatrix}, \label{ptclosed}
\end{align}
in view of the transformations \eqref{transf1}, \eqref{transf2} where $\eta$ is governed by
\begin{align}
    \dot{\eta}=(-\omega_h  \eta+\omega_h y)\mu^q.
\end{align}

We define the following dilation and contraction transformations
\begin{align}
    \check{\tau}={}&\begin{cases}
    t_0+T\ln(\mu), & \text{if } q= 1,\\
    t_0+T\left( \frac{\mu^{q-1}-1}{q-1}  \right),  & \text{if } q> 1, 
\end{cases}  \label{dilat}  \\
    t={}&\begin{cases}
    t_0+T\left(1-e^{-\frac{\check{\tau}-t_0}{T}}\right), & \text{if } q= 1,\\
    t_0+T\left(1-\left(\frac{T}{T+(q-1)(\check{\tau}-t_0)} \right)^{\frac{1}{q-1}} \right), & \text{if } q> 1, 
\end{cases} \label{contrac}
\end{align}
for $\check{\tau} \in [t_0,\infty)$, $t \in [t_0,t_0+T)$. To achieve PT convergence to the extremum, we replace the sinusoids with “chirpy” perturbation and demodulation signals whose frequency grows rather than being constant:
\begin{align}
    &S(\mu) \nonumber \\
    &={}\begin{cases}
    \Big[ a_1 \sin(\omega_1 (t_0+T\ln(\mu))  \, \,  \cdots    \\
    \hspace{0.2cm} \cdots \, \,  a_n \sin(\omega_n (t_0+T\ln(\mu)) \Big]^T, & \text{if } q= 1, \\
    \Big[ a_1 \sin\left(\omega_1  \left(t_0+T \frac{\mu^{q-1}-1}{q-1}  \right)  \right) \, \, \cdots \\
    \hspace{0.2cm} \cdots \, \,  a_n \sin\left(\omega_n  \left(t_0+T \frac{\mu^{q-1}-1}{q-1}  \right)  \right) \Big]^T, & \text{if } q> 1, 
\end{cases}  \\
    &M(\mu) \nonumber \\
    &={}\begin{cases}
    \Big[ \frac{2}{a_1} \sin(\omega_1 (t_0+T\ln(\mu))  \, \,  \cdots    \\
    \hspace{0.2cm} \cdots \, \,  \frac{2}{a_n} \sin(\omega_n (t_0+T\ln(\mu)) \Big]^T, & \text{if } q= 1, \\
    \Big[ \frac{2}{a_1} \sin\left(\omega_1  \left(t_0+T \frac{\mu^{q-1}-1}{q-1}  \right)  \right) \, \, \cdots \\
    \hspace{0.2cm} \cdots \, \,  \frac{2}{a_n} \sin\left(\omega_n  \left(t_0+T \frac{\mu^{q-1}-1}{q-1}  \right)  \right) \Big]^T, & \text{if } q> 1. 
\end{cases}
\end{align}

The convergence result is stated in the following theorem.
\begin{theorem} \label{the2}
Consider the feedback system \eqref{ptclosed} with the parameters that satisfy \eqref{cond1}, \eqref{cond2} under Assumption \ref{assconvex}. There exists 
$\bar{\omega}$ and for any $\omega > \bar{\omega}$ 
there exists an open ball $\mathcal{B}$ centered at the point $(\hat{\theta}, \hat{G}, \eta, \alpha)=(\theta^*, 0, h(\theta^*),0) = : \Upsilon$ such that  for any initial condition starting in the ball $\mathcal{B}$, the system \eqref{ptclosed} has a unique solution and the solution converges to $\Upsilon$ in prescribed time $T$. Furthermore, $y(t)$ converges to $h(\theta^*)$ in prescribed time $T$.
\end{theorem}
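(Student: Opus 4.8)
The plan is to reduce the prescribed-time problem to the exponential one already settled in Theorem \ref{the1} by means of the time-dilation transformation \eqref{dilat}. The central observation is that, for both $q=1$ and $q>1$, differentiating \eqref{dilat} and using $\dot{\mu}=\tfrac{1}{T}\mu^2$ (which follows directly from \eqref{xidef}) yields the single clean identity
\begin{align}
    \frac{d\check{\tau}}{dt}={}&\mu^q(t-t_0).
\end{align}
Since every right-hand side in the closed-loop system \eqref{ptclosed} carries the common factor $\mu^q$, the change of time variable $t\mapsto\check{\tau}$ divides out this factor exactly: writing $\frac{dx}{d\check{\tau}}=\frac{dx/dt}{d\check{\tau}/dt}$ for each state $x\in\{\tilde{\theta},\hat{G},\tilde{\eta},\alpha\}$ removes $\mu^q$ throughout. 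In particular, the $\alpha$-dynamics \eqref{mudot} collapse to $\frac{d\alpha}{d\check{\tau}}=-\lambda\alpha$, which is precisely \eqref{mu} with $t$ replaced by $\check{\tau}$.

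First I would verify that the chirp perturbation and demodulation signals reduce to the constant-frequency sinusoids of the exponential design once expressed in $\check{\tau}$. By construction their arguments are exactly $\omega_i(t_0+T\ln\mu)=\omega_i\check{\tau}$ when $q=1$, and $\omega_i(t_0+T\tfrac{\mu^{q-1}-1}{q-1})=\omega_i\check{\tau}$ when $q>1$; hence $S(\mu)=S(\check{\tau})$ and $M(\mu)=M(\check{\tau})$ coincide with \eqref{St}, \eqref{Mt} evaluated at $\check{\tau}$. Consequently the $\check{\tau}$-system is term-by-term identical to the exponential closed loop \eqref{ffed}, now evolving on the dilated horizon $\check{\tau}\in[t_0,\infty)$ because \eqref{dilat} maps $t\to t_0+T^-$ to $\check{\tau}\to\infty$.

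Next I would invoke Theorem \ref{the1} verbatim on the $\check{\tau}$-system. Applying the state transformation \eqref{trans}, the averaging in the fast time scale $\tau=\omega\check{\tau}$, the Hurwitz property of the block-triangular Jacobian under \eqref{cond1}, \eqref{cond2}, and the singular-perturbation argument, I obtain a $\bar{\omega}$ such that for every $\omega>\bar{\omega}$ all solutions converge exponentially in $\check{\tau}$ to $\Upsilon$, with $\alpha(\check{\tau})\to0$ and $\theta\to\theta^*$, $y\to h(\theta^*)$ as $\check{\tau}\to\infty$. It remains to pull this back through the contraction transformation \eqref{contrac}: since $\check{\tau}\to\infty$ corresponds to $t\to t_0+T^-$, exponential decay $e^{-c\check{\tau}}$ becomes, via \eqref{mudef}, decay of order $(t_0+T-t)^{cT}$ for $q=1$ and faster-than-polynomial decay for $q>1$, so every state reaches $\Upsilon$ exactly at the prescribed terminal time $t=t_0+T$. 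The extremum-convergence conclusion then follows as in Step 5 of Theorem \ref{the1}: boundedness of the transformed states together with $\alpha\to0$ forces $\theta\to\theta^*$ and $y=h(\theta)\to h(\theta^*)$ in prescribed time $T$.

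I expect the main obstacle to be this last step, namely certifying that exponential convergence on the infinite dilated horizon genuinely yields convergence \emph{at} the finite terminal time under the contraction map. This requires tracking the decay estimate through \eqref{mudef} to confirm that the limit $t\to t_0+T^-$ is actually attained (rather than a mere neighborhood being reached), and checking that all transformed states and the chirp signals remain well-defined throughout $[t_0,t_0+T)$, with $\tilde{\theta}=\alpha\tilde{\theta}_f$, $\hat{G}=\alpha\hat{G}_f$, and $\tilde{\eta}=\alpha^2\tilde{\eta}_f$ staying bounded and vanishing as $\alpha\to0$, even as the demodulation gain $1/\alpha$ and the chirp frequency blow up near the terminal.
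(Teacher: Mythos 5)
Your proposal is correct and follows essentially the same route as the paper: the paper's proof consists precisely of the time dilation $d\check{\tau}/dt=\mu^q(t-t_0)$ via \eqref{dilat}, the observation that the chirps become constant-frequency sinusoids \eqref{Sttau}, \eqref{Mttau} so that the $\check{\tau}$-system \eqref{ptclosedtaucheck} coincides with \eqref{ffed}, an invocation of Steps 1--5 of Theorem \ref{the1}, and the contraction \eqref{contrac} back to $t\in[t_0,t_0+T)$. The only slight inaccuracy is your mention of a ``singular-perturbation argument'' in Theorem \ref{the1}, whose static-map proof uses only the state transformation and averaging (singular perturbation enters only in the dynamic-systems case); this does not affect the validity of your argument.
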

\begin{proof}

\textbf{Step 0: Time dilation from $t$ to $\check{\tau}$.}
Considering \eqref{dilat}, \eqref{contrac} along with
\begin{align}
    \frac{d \check{\tau}}{dt}={}\mu^q(t-t_0), \label{dtaudt}
\end{align}
we can write the system \eqref{ptclosed} in the dilated $\check{\tau}$-domain 
\begin{align}
    \frac{d}{d \check{\tau}}\begin{bmatrix}
    \tilde{\theta} \\ \hat{G} \\ \tilde{\eta} \\ \alpha
    \end{bmatrix}=\begin{bmatrix}
        K  \hat{G} \\ -\omega_l \hat{G}+\omega_l  (y-h(\theta^*)-\tilde{\eta}) \frac{1}{\alpha}\mathcal{M}(\check{\tau})  \\
        -\omega_h \tilde{\eta}+\omega_h (y-h(\theta^*)) \\
        -\lambda \alpha
    \end{bmatrix}, \label{ptclosedtaucheck}
\end{align}
with the following perturbation/demodulation signals
\begin{align}
    \mathcal{S}(\check{\tau})=& \begin{bmatrix} a_1 \sin(\omega_1 \check{\tau})&  & \cdots & & a_n \sin(\omega_n \check{\tau}) \end{bmatrix}^T, \label{Sttau} \\
    \mathcal{M}(\check{\tau})=&\begin{bmatrix} \frac{2}{a_1} \sin(\omega_1 \check{\tau})&  & \cdots & & \frac{2}{a_n} \sin(\omega_n \check{\tau}) \end{bmatrix}^T. \label{Mttau}
\end{align}

Note that the system \eqref{ptclosedtaucheck} with \eqref{Sttau}, \eqref{Mttau} is in the similar form to \eqref{ffed} with \eqref{St}, \eqref{Mt}, except that the dilated time $\check{\tau}$ is used instead of $t$. 
The utilization of this temporal transformation technique facilitates the application of the averaging theorem because the frequency of perturbation/demodulation signals becomes constant in $\check{\tau}$-domain. The remainder of the proof can be completed similarly to the 
proof of Theorem \ref{the1} by following steps from $1$ to $5$ and performing time contraction from $\check{\tau}$ to $t$.
\hfill
\end{proof}

\section{Exponential Unbiased Extremum Seeker for Dynamic Systems} \label{expesdynamicsec}
In this section, we extend our results in Section \ref{expExSect} to dynamic systems. For this, we consider a general multi-input single-output nonlinear model
\begin{align}
    \dot{x}={}&f(x,u), \\
    y={}&h(x),
\end{align}
where $x \in \mathbb{R}^m$ is the state, $u \in \mathbb{R}^n$ is the input, $y \in \mathbb{R}$ is the output  and the unknown functions $f: \mathbb{R}^m \times \mathbb{R}^n \to \mathbb{R}^m$ and $h: \mathbb{R}^m \to \mathbb{R}$ are smooth.  Suppose there is a smooth control law
\begin{align}
    u={}&\phi(x,\theta)
\end{align}
parametrized by a vector parameter $\theta \in \mathbb{R}^n$. The closed-loop system 
\begin{align}
    \dot{x}={}f(x,\phi(x,\theta)) \label{closedf}
\end{align}
then has equilibria parameterized by $\theta$. We make the following assumptions about the closed-loop system:

\begin{assumption} \label{ass:smm}
There exists a smooth function $l: \mathbb{R}^n \to \mathbb{R}^m$ such that $f (x, \phi(x, \theta))=0$ if and only if $x = l(\theta)$.
\end{assumption}

\begin{assumption} \label{ass:exps}
For each $\theta \in \mathbb{R}^n$, the equilibrium $x = l(\theta)$ of the
system \eqref{closedf} is locally exponentially stable uniformly
in $\theta$.
\end{assumption}

\begin{assumption} \label{ass:exist}
The function $h \circ l$ is $\mathcal{C}^4$, and there exists $\theta^* \in \mathbb{R}^n$ such that
\begin{align}
    \frac{\partial }{\partial \theta} (h \circ l)(\theta^*)={}&0, \\
    \frac{\partial^2 }{\partial \theta^2} (h \circ l)(\theta^*)={}&H<0, \quad H=H^T.
\end{align}
\end{assumption}

We aim to design a controller $u$ to drive the output $y$ directly to its optimum $h \circ l(\theta^*)$ exponentially without any steady-state oscillation and without the need for knowledge of $\theta^*$, $h$, or $l$.

The perturbation and demodulation signals are defined by \eqref{St} and \eqref{Mt}, respectively, and $\alpha$ is governed by \eqref{mu}. The probing frequencies $\omega_i$'s, the filter coefficients $\omega_h$ and $\omega_l$, gain $K$ and parameter $\lambda$ are selected as  follows
\begin{align}
    \omega_i={}&\omega {\omega}_i^{\prime} ={} \mathcal{O}(\omega), \qquad i \in \{1, 2, \dots, n \}, \label{dparam1} \\
    \omega_h={}&\omega {\omega}_H={}\omega \delta \omega_H^{\prime} ={} \mathcal{O}(\omega \delta ), \\
    \omega_l={}&\omega {\omega}_L={}\omega \delta \omega_L^{\prime} ={} \mathcal{O}(\omega \delta ), \\
    K={}&\omega {K}^{\prime}={}\omega \delta {K}^{\prime \prime} ={}\mathcal{O}(\omega \delta), \\
    \lambda={}&\omega {\lambda}^{\prime}={}\omega \delta {\lambda}^{\prime \prime}={}\mathcal{O}(\omega \delta), \label{dparam5}
\end{align}
where $\omega$ and $\delta$ are small positive constants, $ {\omega}_i^{\prime}$ is a rational number, ${\omega}_H^{\prime}, {\omega}_L^{\prime}$ and ${\lambda}^{\prime \prime}$ are $\mathcal{O}(1)$ positive constants, ${K}^{\prime \prime}$ is a $n \times n$ diagonal
matrix with $\mathcal{O}(1)$ positive elements. In addition, the parameters should satisfy \eqref{cond1} and \eqref{cond2}.

We summarize the closed-loop system as follows
\begin{align}
    \frac{d}{d t}\begin{bmatrix} x \\
    \tilde{\theta} \\ \hat{G} \\ \tilde{\eta} \\ \alpha
    \end{bmatrix}=\begin{bmatrix} f(x,\phi(x,\theta^*+\tilde{\theta}+S(t))) \\
        K \hat{G} \\ -\omega_l \hat{G}+\omega_l (y-h \circ l (\theta^*)-\tilde{\eta}) \frac{1}{\alpha} M(t) \\
        -\omega_h \tilde{\eta}+\omega_h (y-h\circ l (\theta^*)) \\
        -\lambda \alpha
    \end{bmatrix}.  \label{closeddynamical}
\end{align}

The convergence result is stated in the following theorem.
\begin{theorem}
Consider the feedback system \eqref{closeddynamical} under Assumptions
\ref{ass:smm}--\ref{ass:exist}. There exists $\bar{\omega}>0$ and for any $\omega \in (0, \bar{\omega})$ there exists $\bar{\delta}>0$ such that for the given $\omega$ and $\delta \in (0, \bar{\delta})$ there exists an open ball $\mathcal{B}$ centered at the point $(x, \hat{\theta} , \hat{G}, \eta) = (l(\theta^*), \theta^*, 0, h \circ l(\theta^*))= : \Upsilon$ such that for any initial condition starting in the ball $\mathcal{B}$, the system \eqref{closeddynamical} has a unique solution and the solution converges exponentially to $\Upsilon$.
Furthermore, $y(t)$ exponentially converges to $h \circ l(\theta^*)$. 
\end{theorem}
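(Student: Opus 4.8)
The plan is to treat this as the dynamic-map counterpart of Theorem \ref{the1}, where the only genuinely new ingredient is the plant state $x$, which introduces a second time scale on top of the averaging time scale already exploited in the static case. The scaling \eqref{dparam1}--\eqref{dparam5} is precisely what makes this tractable: rescaling time by $\tau=\omega t$ renders the plant dynamics $dx/d\tau=(1/\omega)f(x,\phi(x,\theta^*+\tilde\theta+S))$ fast (of order $1/\omega$), the probing phase of order one, and the adaptation of the estimator states of order $\delta$. First I would proceed exactly as in Step 1 of the proof of Theorem \ref{the1}, applying the transformation \eqref{trans} to the error states $\tilde\theta,\hat G,\tilde\eta$ while leaving $x$ untouched, so that the perturbation entering the plant reads $\theta=\theta^*+\alpha\tilde\theta_f+\alpha S(t)$ and collapses toward $\theta^*$ as $\alpha\to 0$.

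Next I would perform the singular-perturbation reduction with $\omega$ as the fast-time-scale parameter. Freezing the slow estimator states, the boundary-layer system is the closed loop \eqref{closedf}, whose equilibrium $x=l(\theta)$ is exponentially stable uniformly in $\theta$ by Assumption \ref{ass:exps}; Assumption \ref{ass:smm} guarantees this equilibrium is the unique root of $f(x,\phi(x,\theta))=0$ and that it depends smoothly on $\theta$. On the resulting slow manifold one substitutes $x=l(\theta)$, so that the measured output becomes $y=(h\circ l)(\theta^*+\alpha\tilde\theta_f+\alpha S(t))$. This is the crucial observation: the reduced system is \emph{identical in form} to the transformed static-map system of Theorem \ref{the1}, with $h$ replaced by the composite map $h\circ l$ and Assumption \ref{assconvex} replaced by Assumption \ref{ass:exist}, which supplies the same $\mathcal{C}^4$ regularity and the Hessian condition $H<0$ at $\theta^*$. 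Consequently the function $\nu$ of \eqref{nudefined} and its derivative properties carry over verbatim.

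With the plant eliminated, I would then reuse the averaging machinery of Steps 2--4 of Theorem \ref{the1} on the reduced system, now with $\delta$ playing the role of the small averaging parameter. The averaged equilibrium, its Taylor/L'Hospital computation, and the block-upper-triangular averaged Jacobian are unchanged, so conditions \eqref{cond1}--\eqref{cond2} again render the Jacobian Hurwitz and yield local exponential stability of the averaged reduced system. I would then close the argument by invoking the averaging theorem and the singular-perturbation (Tikhonov-type) theorem of \cite{khalil2002nonlinear} in the nested order dictated by the statement — fix $\omega<\bar\omega$ to secure the time-scale separation for the plant, then choose $\delta<\bar\delta(\omega)$ small enough for the averaging estimate — to transfer exponential stability from the averaged reduced system to the full interconnected system \eqref{closeddynamical}. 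Undoing \eqref{trans} as in Steps 4--5 then gives exponential convergence of $(x,\hat\theta,\hat G,\eta)$ to $\Upsilon$ and of $y$ to $(h\circ l)(\theta^*)$.

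The hard part will be the simultaneous, correctly ordered use of two asymptotic reductions that do not commute — singular perturbation in $\omega$ and averaging in $\delta$ — compounded by the fact, noted after Theorem \ref{the1}, that the target $\Upsilon$ is not an equilibrium in the usual sense and the perturbation amplitude $\alpha$ vanishes. To apply a Tikhonov-type theorem one needs the reduced slow system to possess genuine exponential stability, and this is available only after the transformation \eqref{trans}, in which the averaged system \eqref{averagesys} does have the exponentially stable equilibrium \eqref{aveq}. I therefore expect the main obstacle to be performing the singular-perturbation reduction in the transformed coordinates while keeping the physical boundary layer, which lives in the untransformed $x$, consistent, and showing that the $\mathcal{O}(\omega)$ manifold-tracking error and the $\mathcal{O}(\delta)$ averaging residual decay compatibly with the vanishing amplitude, so that the Hurwitz margin from \eqref{cond2} is preserved uniformly in time. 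This is once again the recurring theme that learning must outpace the waning of the perturbation; the algebra itself is inherited almost unchanged from Theorem \ref{the1}.
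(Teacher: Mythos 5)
Your proposal is correct and follows essentially the same route as the paper's proof: the same state transformation \eqref{trans} applied only to the estimator states, freezing $x$ at the quasi-steady state $x=l(\theta^*+\alpha\tilde{\theta}_f+\alpha \bar{S}(\tau))$ so the reduced system coincides with the transformed static-map system with $h$ replaced by $h\circ l$ (Assumption \ref{ass:exist} supplying \eqref{nupartialsdyn}), averaging in $\delta$ on that reduced system, and a Tikhonov-type argument via Theorem 11.4 of \cite{khalil2002nonlinear} with the boundary-layer model $\dot{x}_{\mathrm{b}}=f(x_{\mathrm{b}}+l(\theta),\phi(x_{\mathrm{b}}+l(\theta),\theta))$ exponentially stable by Assumption \ref{ass:exps}. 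Even the coordination issue you flag at the end is resolved in the paper exactly as you anticipate: it shifts coordinates to $\tilde{\zeta}_f=\zeta_f-\zeta_{f,r}^{\Pi}(\tau)$ and $\tilde{x}=x-L(\tau,\zeta_f)$, where $\zeta_{f,r}^{\Pi}$ is the exponentially stable periodic solution produced by averaging, before invoking the singular-perturbation theorem, so no additional idea beyond your outline is required.
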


\begin{proof} Let us proceed through the proof step by step.

\textbf{Step 1: Time-scale separation.}
We rewrite the system \eqref{closeddynamical} in the time scale $\tau=\omega t$ as
\begin{align}
    \omega \frac{\text{d} x}{\text{d} \tau}={}&f(x,\phi(x,\theta^*+\tilde{\theta}+\bar{S}(\tau)\alpha)), \label{systau1} \\
    \frac{d}{d \tau}\begin{bmatrix} 
    \tilde{\theta} \\ \hat{G} \\ \tilde{\eta} \\ \alpha
    \end{bmatrix}={}&\delta \begin{bmatrix}
        K^{\prime \prime} \hat{G} \\ -\omega_L^{\prime} \hat{G}+\omega_L^{\prime} (y-h\circ l(\theta^*)-\tilde{\eta}) \frac{1}{\alpha} \bar{M}(\tau) \\
        -\omega_H^{\prime} \tilde{\eta}+\omega_H^{\prime} (y-h\circ l(\theta^*)) \\
        -\lambda^{\prime \prime} \alpha
    \end{bmatrix}, \label{systau2}
\end{align}
where $\bar{S}(\tau)=S(\tau/\omega), \bar{M}(\tau)=M(\tau/\omega)$.

\textbf{Step 2: State transformation.} Consider the following transformations
\begin{align}
     \tilde{\theta}_f={}& \frac{1}{\alpha} \tilde{\theta}, \qquad
     \hat{G}_f={} \frac{1}{\alpha} \hat{G},  \qquad 
     \tilde{\eta}_f={} \frac{1}{\alpha^2} \tilde{\eta},
\end{align}
which transform \eqref{systau1}, \eqref{systau2} to the following system
\begin{align}
    \omega \frac{\text{d} x}{\text{d} \tau}={}&f(x,\phi(x,\theta^*+\tilde{\theta}_f\alpha+\bar{S}(\tau)\alpha)), \label{sysftau1} \\
    \frac{d \zeta_{f}}{d \tau} 
    ={}&\delta E(\tau, x, \zeta_f),  \label{sysftau2}
\end{align}
where $\zeta_{f}={}\begin{bmatrix}
    \tilde{\theta}_{f} & \hat{G}_{f} & \tilde{\eta}_{f} & \alpha
    \end{bmatrix}^T$ and
\begin{align}
    &E(\tau, x, \zeta_f) \nonumber \\
    &\, \,={}\begin{bmatrix}
        \lambda^{\prime \prime} \tilde{\theta}_f+K^{\prime \prime} \hat{G}_f \\ (\lambda^{\prime \prime}-{\omega}_L^{\prime})\hat{G}_f +{\omega}_L^{\prime}(y-h\circ l(\theta^*)-\tilde{\eta}_f\alpha^2) \frac{1}{\alpha^2} \bar{M}(\tau)  \\
        (2\lambda^{\prime \prime}-{\omega}_H^{\prime}) \tilde{\eta}_f+{\omega}_H^{\prime} \frac{1}{\alpha^2} (y-h\circ l(\theta^*)) \\
        -{\lambda}^{\prime \prime} \alpha
    \end{bmatrix}.
\end{align}

\textbf{Step 3: Averaging analysis for reduced system.} We first freeze $x$ in \eqref{sysftau1} at its equilibrium value $x =L(\tau,\zeta_f)=l(\theta^*+\tilde{\theta}_f\alpha+\bar{S}(\tau)\alpha)$, substitute it into \eqref{sysftau2} and then get the reduced system
\begin{align}
    \frac{d \zeta_{f,r}}{d \tau}={}\delta E(\tau, L(\tau,\zeta_{f,r}),\zeta_{f,r}), \label{zetafrred}
\end{align}
where $\zeta_{f,r}=\begin{bmatrix}
    \tilde{\theta}_{f,r} & \hat{G}_{f,r} & \tilde{\eta}_{f,r} & \alpha
    \end{bmatrix}^T$,
\begin{align}
    &E(\tau, L(\tau,\zeta_{f,r}),\zeta_{f,r}) \nonumber \\
    &=\begin{bmatrix}
        \lambda^{\prime \prime} \tilde{\theta}_{f,r}+K^{\prime \prime} \hat{G}_{f,r} \\ (\lambda^{\prime \prime}-{\omega}_L^{\prime})\hat{G}_{f,r} +{\omega}_L^{\prime}(\nu(\tilde{\theta}_{f,r}\alpha +\bar{S}(\tau)\alpha)-\tilde{\eta}_{f,r}\alpha^2) \frac{\bar{M}(\tau)}{\alpha^2}   \\
        (2\lambda^{\prime \prime}-{\omega}_H^{\prime}) \tilde{\eta}_{f,r}+{\omega}_H^{\prime} \frac{1}{\alpha^2} \nu(\tilde{\theta}_{f,r}\alpha +\bar{S}(\tau)\alpha) \\
        -{\lambda}^{\prime \prime} \alpha
    \end{bmatrix}
\end{align}
and
\begin{align}
    \nu(z)=h \circ l(z+\theta^*)-h \circ l(\theta^*) \label{nuzetadyn}
\end{align}
with $z=\tilde{\theta}_{f,r}\alpha +\bar{S}(\tau)\alpha$. 
From Assumption \ref{ass:exist}, we get
\begin{align}
    \nu(0)=0, \quad \frac{\partial }{\partial z}\nu(0)=0, \quad \frac{\partial^2 }{\partial z^2}\nu(0)=H<0. \label{nupartialsdyn}
\end{align}
Note that the reduced system \eqref{zetafrred} has the same structure as \eqref{systautrans} except the different constant parameters. Therefore, we can perform averaging analysis and stability analysis in Step 3 and 4 of the proof of Theorem 1, respectively, for the reduced system \eqref{zetafrred}. Then, we conclude that 
there exists $\delta$ such that for all $\delta \in (0, \bar{\delta})$,
the system \eqref{zetafrred} has a unique exponentially stable periodic solution  $\zeta_{f,r}^{\Pi}(\tau)= \begin{bmatrix}
    \tilde{\theta}_{f,r}^{\Pi}(\tau) & \hat{G}_{f,r}^{\Pi}(\tau) & \tilde{\eta}_{f,r}^{\Pi}(\tau) & \alpha^{\Pi}(\tau)
    \end{bmatrix}^T$ such that
\begin{align}
    \frac{d \zeta_{f,r}^{\Pi}(\tau)}{d \tau}={}\delta E(\tau, L(\tau,\zeta_{f,r}^{\Pi}(\tau)),\zeta_{f,r}^{\Pi}(\tau)).
\end{align}

\textbf{Step 4: Singular perturbation analysis.} To convert the system \eqref{sysftau1} and \eqref{sysftau2} into the standard singular perturbation form, we shift the states $\zeta_{f}$ and $x$ using the transformations $\tilde{\zeta}_{f}=\zeta_{f}-\zeta_{f,r}^{\Pi}(\tau)$ and $\tilde{x}=x-L(\tau,\zeta_f)$ such that
\begin{align}
    \frac{d \tilde{\zeta}_f}{d \tau}={}&\delta \tilde{E}(\tau,\tilde{x},\tilde{\zeta}_f), \label{dzetasing1} \\
    \omega \frac{d \tilde{x}}{d \tau}={}&\tilde{F}(\tau,\tilde{x},\tilde{\zeta}_f), \label{dxsing}
\end{align}
where
\begin{align}
    \tilde{E}(\tau,\tilde{x},\tilde{\zeta}_f)={}&E(\tau, \tilde{x}+L(\tau,\tilde{\zeta}_f+ \zeta_{f,r}^{\Pi}(\tau)), \tilde{\zeta}_f+\zeta_{f,r}^{\Pi}(\tau)) \nonumber \\
    &-E(\tau, L(\tau,\zeta_{f,r}^{\Pi}(\tau)), \zeta_{f,r}^{\Pi}(\tau)), \\
    \tilde{F}(\tau,\tilde{x},\tilde{\zeta}_f)={}&f\Big(\tilde{x}+L(\tau,\tilde{\zeta}_f+ \zeta_{f,r}^{\Pi}(\tau)), \nonumber \\
    &\phi\big(\tilde{x}+L(\tau,\tilde{\zeta}_f+ \zeta_{f,r}^{\Pi}(\tau)), \nonumber \\
    &\theta^*+\tilde{\zeta}_{f_1}\alpha+{\zeta}_{f_1,r}^{\Pi}(\tau)\alpha+\bar{S}(\tau)\alpha\big)\Big),
\end{align}
where $\tilde{\zeta}_{f_1}=\tilde{\theta}_{f}-\tilde{\theta}_{f,r}^{\Pi}(\tau)$ and ${\zeta}_{f_1,r}^{\Pi}(\tau)=\tilde{\theta}_{f,r}^{\Pi}(\tau)$. Note that $\tilde{x} = 0$ is the quasi-steady state. 
By substituting the quasi-steady state into \eqref{dzetasing1}, we obtain the following reduced model  
\begin{align}
    \frac{d \tilde{\zeta}_{f,r}}{d \tau}={}\delta \tilde{E}(\tau,0,\tilde{\zeta}_{f,r}), \label{zetareduced}
\end{align}
which has an equilibrium at the origin $\tilde{\zeta}_{f,r}=0$. We prove in Step 3 that this equilibrium is exponentially stable.

The next step in the singular perturbation analysis is to examine
the boundary layer model in the time scale $t = \tau/\omega$ as follows
\begin{align}
    \frac{\text{d} x_{\text{b}}}{\text{d} t}={}&\tilde{F}(\tau,x_b,\tilde{\zeta}_f),\nonumber \\
    ={}&f(x_{\text{b}}+l(\theta),\phi(x_{\text{b}}+l(\theta),\theta)). \label{xbdyn}
\end{align}
Recalling $f (l(\theta), \phi(l(\theta), \theta)) \equiv 0$,
we deduce that $x_{\text{b}} \equiv 0$ is an equilibrium of \eqref{xbdyn}. According to Assumption 2, this
equilibrium is locally exponentially stable uniformly in $\theta$.

By combining exponential stability of the reduced model \eqref{zetareduced}
with the exponential stability of the boundary layer model \eqref{xbdyn},
and noting that $\tilde{E}(\tau,0,0)=0, \tilde{F}(\tau,0,0)=0$, we conclude from Theorem 11.4 of \cite{khalil2002nonlinear} that $\tilde{\zeta}_f \to 0$ and $\tilde{x} \to 0$, i.e., $\zeta_f \to \zeta_{f,r}^{\Pi}$ and $x \to l(\theta)=L(\tau, \zeta_f)$ exponentially as $\tau \to \infty$. 

\textbf{Step 5: Convergence to extremum.} Note that $\tilde{\theta}_f (\tau) \to \tilde{\theta}_f^{\Pi} (\tau)$ and $\alpha \to 0$ exponentially. It follows then that $\theta(\tau) = \theta^* + \tilde{\theta}_f (\tau) \alpha + \bar{S}(\tau)\alpha \to \theta^*$ exponentially and $l(\theta)=l(\theta^*+\tilde{\theta}_f\alpha+\bar{S}(\tau)\alpha) \to l(\theta^*)$ exponentially.  Consequently, $y = h(x)$ exponentially converges to $h \circ l(\theta^*)$.
\hfill
\end{proof}

\section{Robust Exponential Extremum Seeker} \label{robustexposection}
An important aspect of our design depicted in Fig. \ref{ESBlock} is that the multiplicative inverse of the function $\alpha$ experiences exponential growth, while the high-pass filtered signal $y-\eta$ decays to zero at a much faster rate, resulting in a bounded signal. However, in practical implementations, the boundedness of the resulting signal may not be guaranteed due to various factors such as measurement noise which prevents the complete convergence of $y-\eta$ to zero, and numerical inaccuracies that may arise from the multiplication of large and small values. The aforementioned limitation is not specific to our design but applies  to any given prescribed-time  stabilization scheme available in the literature \cite{song2017time}, \cite{song2023prescribed}. Furthermore, there may be instances where the extremum point changes over time, rather than being stationary, in which case the traditional extremum seeking design is capable of tracking it. To overcome these challenges and enhance the robustness of our design, we propose 
a modified $\alpha$-signal that exponentially converges to an arbitrarily defined small positive number $\beta$ rather than exponential decay function \eqref{mu}. In this case, our design offers a convergence to the neighborhood of the extremum with adjustable steady state oscillations. The new $\alpha$-signal  is governed by the following dynamics
\begin{align}
    \dot{\alpha}(t)=&{}-\lambda \alpha(t)+\lambda \beta, \qquad \alpha(t_0)=\alpha_0. \label{mumodif}
\end{align}

\begin{remark}
Our new design may seem to boil down to the traditional ES in \cite{krstic2000stability}. However, an ES design that employs a constant, small $\alpha(t)\equiv \beta$ for all $t \geq t_0$ in Figure \ref{ESBlock} results in an initial rapid growth of $\hat{\theta}$. Our design, with adjustable oscillations, addresses this issue. A further discussion is provided in Section \ref{applicationsection}.
\end{remark}

By relaxing the $\mathcal{C}^4$ condition in Assumption \ref{assconvex}, we make the following assumption:
\begin{assumption} \label{convexrobustass}
There exists $\theta^{*} \in \mathbb{R}^n$ such that
\begin{align}
    \frac{\partial}{\partial \theta} h(\theta^{*})={}&0, \\
    \frac{\partial^2}{\partial \theta^2} h(\theta^{*})={}&H<0, \quad H=H^T.
\end{align}
\end{assumption}
We summarize the closed-loop system depicted in Fig. \ref{ESBlock} with the modified  $\alpha$-dynamics \eqref{mumodif} as follows
\begin{align}
    \frac{d}{d t}\begin{bmatrix}
    \tilde{\theta} \\ \hat{G} \\ \tilde{\eta} \\ \alpha
    \end{bmatrix}=\begin{bmatrix}
        K \hat{G} \\ -\omega_l \hat{G}+\omega_l (y-h(\theta^*)-\tilde{\eta})\frac{1}{\alpha} M(t) \\
        -\omega_h \tilde{\eta}+\omega_h (y-h(\theta^*)) \\
        -\lambda \alpha+\lambda \beta
    \end{bmatrix}. \label{closedrobust}
\end{align}
We present the following result for static maps, which can be easily extended to dynamic systems.

\begin{theorem} \label{theoremrobust}
Consider the feedback system \eqref{closedrobust} 
under Assumption \ref{convexrobustass}. There exist $\bar{\omega}, \bar{a}>0$ such that for all $\omega > \bar{\omega}$ and $\beta |a| \in (0, \bar{a})$ 
there exists an open ball $\mathcal{B}$ centered at the point $(\hat{\theta}, \hat{G}, \eta, \alpha)=(\theta^*, 0, h(\theta^*),\beta) = : \Upsilon$ such that  for any initial condition starting in the ball $\mathcal{B}$, the system \eqref{closedrobust} has a unique solution and the solution converges exponentially to an $\mathcal{O}(\beta \delta+ \beta | a|)$-neighborhood of $\Upsilon$.
Furthermore, $y(t)$ exponentially converges to an $\mathcal{O}(\beta^2 \delta^2+ \beta^2 | a|^2)$-neighborhood of $h(\theta^*)$.
\end{theorem}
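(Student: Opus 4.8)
The plan is to follow the five-step architecture of the proof of Theorem \ref{the1}, since \eqref{closedrobust} differs from \eqref{ffed} only through the modified $\alpha$-dynamics \eqref{mumodif}. I would begin by solving the scalar equation \eqref{mumodif} in closed form, $\alpha(t)=\beta+(\alpha_0-\beta)e^{-\lambda(t-t_0)}$, observing that for $\alpha_0,\beta>0$ the signal converges exponentially to $\beta$ while remaining confined to $[\min(\alpha_0,\beta),\max(\alpha_0,\beta)]$ and hence bounded away from zero. This uniform positive lower bound on $\alpha$ is the structural departure from Theorem \ref{the1}: the demodulation gain $1/\alpha$ now saturates at $1/\beta$ instead of blowing up, and it is exactly what permits relaxing $\mathcal{C}^4$ to the bare Hessian condition of Assumption \ref{convexrobustass}, because the term $\nu(\alpha(\tilde\theta_f+\bar{S}))/\alpha^2$ stays bounded with only $\mathcal{C}^2$ regularity once $\alpha\geq\min(\alpha_0,\beta)>0$; no fourth-order Taylor expansion or $\alpha\to 0$ L'Hospital limit is needed.

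I would then apply the same transformation \eqref{trans}, $\tilde\theta_f=\tilde\theta/\alpha$, $\hat G_f=\hat G/\alpha$, $\tilde\eta_f=\tilde\eta/\alpha^2$. Using $\dot\alpha=-\lambda\alpha+\lambda\beta$ in the differentiation produces the transformed system of \eqref{fedtautran} augmented by the extra drift terms $-\lambda(\beta/\alpha)\tilde\theta_f$, $-\lambda(\beta/\alpha)\hat G_f$, $-2\lambda(\beta/\alpha)\tilde\eta_f$, together with $\dot\alpha=-\lambda\alpha+\lambda\beta$. The key observation is that at the steady-state value $\alpha=\beta$ one has $\beta/\alpha=1$, so these extra terms cancel the growth terms $\lambda\tilde\theta_f$, $\lambda\hat G_f$, $2\lambda\tilde\eta_f$ of \eqref{fedtautran}; the transformed dynamics collapse to an ordinary constant-amplitude multivariable ES with effective probing amplitude $\beta a_i$ and demodulation amplitude $2/(\beta a_i)$. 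I would pass to $\tau=\omega t$ and average over the period $\Pi$ as in Step~3 of Theorem \ref{the1}, the boundedness hypotheses of the averaging theorem of \cite{khalil2002nonlinear} being verified just as before.

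The genuinely new content is in the equilibrium and Jacobian. Because $\alpha^{a,e}=\beta$ is now finite rather than $0$, the bias integrals that vanished under the $\alpha\to 0$ limit in Theorem \ref{the1} survive: a Taylor expansion of $\nu$ together with the orthogonality of the probing sinusoids gives the averaged fixed-point condition $H\tilde\theta_f^{a,e}=\mathcal{O}(\beta|a|^2)$, so by invertibility of $H$ the averaged equilibrium carries a genuine, non-vanishing bias, namely $\tilde\theta^{a,e}=\alpha^{a,e}\tilde\theta_f^{a,e}=\mathcal{O}(\beta^2|a|^2)$. The Jacobian at $\alpha=\beta$ is block-upper-triangular: the $\alpha$-block has eigenvalue $-\lambda$, the $\tilde\eta_f$-block is $-\omega_h$, and the $(\tilde\theta_f,\hat G_f)$-block carries $0$ in the $\tilde\theta_f$-diagonal position, $K$ above it, $\omega_l H$ below it, and $-\omega_l I$ in the $\hat G_f$-diagonal position. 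This last block is the standard extremum-seeking matrix, which is Hurwitz whenever $H<0$ and $K,\omega_l>0$; in contrast to Theorems \ref{the1} and \ref{the2}, the restrictive conditions \eqref{cond1}, \eqref{cond2} are therefore not required. Local exponential stability of the averaged system follows, and the averaging theorem yields a unique exponentially stable periodic solution within an $\mathcal{O}(1/\omega)$-neighborhood of the equilibrium.

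Finally I would invert \eqref{trans}. Since the $\mathcal{O}(1/\omega)$ averaging residual on the scaled states is multiplied by the factor $\alpha\to\beta$ upon back-transformation, $\tilde\theta=\alpha\tilde\theta_f$ converges to an $\mathcal{O}(\beta\delta)$-neighborhood of $0$ (with $\delta$ the averaging residual $1/\omega$), and together with the residual probing oscillation of amplitude $\mathcal{O}(\beta|a|)$ this places the state within the claimed $\mathcal{O}(\beta\delta+\beta|a|)$-neighborhood of $\Upsilon$. For the output I would write $\theta-\theta^*=\tilde\theta+\alpha S(t)=\mathcal{O}(\beta\delta+\beta|a|)$ and use the second-order vanishing of $\nu$ at the origin to get $y-h(\theta^*)=\nu(\theta-\theta^*)=\mathcal{O}(|\theta-\theta^*|^2)=\mathcal{O}(\beta^2\delta^2+\beta^2|a|^2)$. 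The main obstacle I anticipate is precisely this order bookkeeping: with no $\alpha\to 0$ limit to annihilate the residual integrals, one must track the joint $(\beta,|a|)$ dependence of the higher-order Taylor remainders through the averaging, and must also confirm that the transient factor $\beta/\alpha$, which is not yet $1$ before $\alpha$ settles, preserves the uniform Hurwitzness on which the averaging estimate over the whole ball $\mathcal{B}$ depends.
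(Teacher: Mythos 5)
Your proposal follows essentially the same route as the paper's Appendix~A proof: the transformation \eqref{trans} applied to \eqref{closedrobust}, producing exactly the extra $-\lambda(\beta/\alpha)$ drift terms you identify; averaging in $\tau=\omega t$; the averaged equilibrium $\alpha^{a,e}=\beta$, $\hat{G}_f^{a,e}=0$ with a genuine bias in $\tilde{\theta}_f^{a,e}$ determined by the demodulated condition \eqref{nuMfeqrob}; Hurwitzness of the Jacobian without conditions \eqref{cond1}, \eqref{cond2}; the averaging theorem; and back-transformation yielding the $\mathcal{O}(\beta/\omega+\beta|a|)$ neighborhood for the state and the quadratic bound for $y$, with $\delta=1/\omega$ exactly as in the paper's Step~4. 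Where you differ in execution, your route is defensible and arguably cleaner: the paper postulates the expansion \eqref{thetapost} and matches powers of $a_j$ against the third-order Taylor expansion \eqref{nutaylor} to compute the bias coefficients $c^i_{j,j}$ explicitly (formally invoking third derivatives of $\nu$ at $0$, beyond what Assumption~\ref{convexrobustass} literally states), whereas your invertibility-of-$H$ bound $\tilde{\theta}_f^{a,e}=\mathcal{O}(\beta|a|^2)$ suffices for the stated $\mathcal{O}$-neighborhoods and is more faithful to the weakened regularity hypothesis.

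There is, however, one substantive slip, in the Jacobian step. At the averaged equilibrium the coupling block from $\tilde{\theta}_f$ into the $\hat{G}_f$ dynamics is \emph{not} the nominal $\omega_l H$: since $\alpha^{a,e}=\beta>0$, no $\alpha\to 0$ limit annihilates the higher-order terms of $\nu$, and the paper's \eqref{jacobrobust}, \eqref{ineqrobust} show this block equals $\omega_l H+[\mathcal{O}(\beta|a|)]_{n\times n}$. Consequently the submatrix $\mathcal{A}$ is Hurwitz only for $\beta|a|$ sufficiently small, which is precisely where the hypothesis $\beta|a|\in(0,\bar{a})$ of the theorem enters, via the determinant computation
\begin{align*}
    \det\bigl((\lambda_{\mathcal{A}}^2+\omega_l\omega^{-1}\lambda_{\mathcal{A}})I_{n\times n}-\omega_l\omega^{-2}KH+[\mathcal{O}(\beta\omega^{-2}|a|)]_{n\times n}\bigr)
\end{align*}
and $H<0$. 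Your proof as written would establish Hurwitzness unconditionally and never use $\bar{a}$; you do flag the uniform-Hurwitzness concern in your closing caveat, but it belongs in the Jacobian computation itself rather than among transient bookkeeping. Relatedly, \eqref{jacobrobust} is not block-upper-triangular --- it carries lower-left couplings from the demodulated gradient into the $\hat{G}_f$ and $\tilde{\eta}_f$ rows --- though your spectral conclusion survives, because the $\tilde{\eta}_f$ column and the $\alpha$ row still decouple the spectrum into $-\lambda$, $-\omega_h$, and the eigenvalues of $\mathcal{A}$. The transient factor $\beta/\alpha\neq 1$ you worry about is harmless: it enters the averaged dynamics \eqref{averagerobust} away from equilibrium but affects neither the equilibrium nor the Jacobian evaluated at $\alpha^{a,e}=\beta$.
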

The proof of this result is given in Appendix \ref{theoremappendix}.

\section{Source Seeking by a Velocity-Actuated Point Mass} \label{sourceseeking}
In this section, we investigate the problem of source localization using an autonomous vehicle modeled as a point mass in a two-dimensional plane
\begin{align}
    \dot{x}_1={}v_{x_1}, \qquad \dot{x}_2={}v_{x_2}, \label{velactmass}
\end{align}
in which the vehicle's position is represented by the vector $\begin{bmatrix} x_1 & x_2 \end{bmatrix}^T$ and its velocity is controlled by inputs $v_{x_1}$ and $v_{x_2}$. The objective of this problem is to guide the vehicle towards the static source of a scalar signal in an environment where the vehicle's position data is not available. The only information provided to the vehicle at its current location is the strength of the signal, which is assumed to decrease as the distance from the source increases. Our specific goal is to detect the source while continuously measuring the source signal, ultimately bringing the vehicle to a complete stop at the exact location of the source. We give a block diagram in Fig. \ref{ptesvehicle}, in which we can apply our exponential uES and uPT-ES designs by using their corresponding $\mu$ and $\alpha$ functions.

\begin{figure}[t]
    \centering
    \includegraphics[width=\columnwidth]{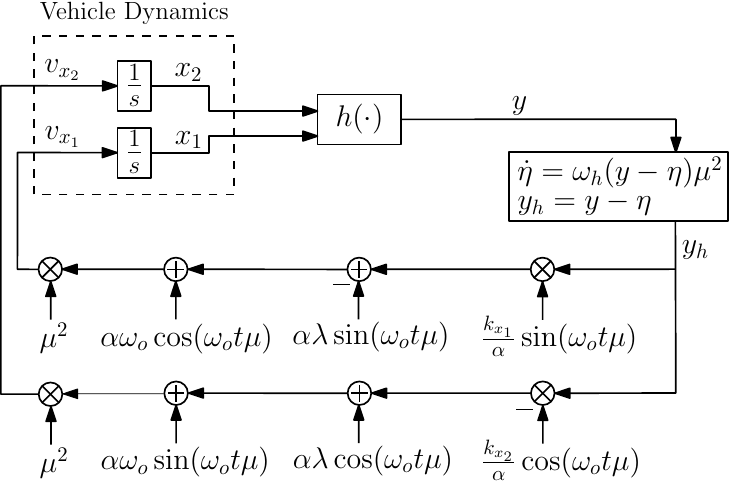} \\
    \caption{The developed ES scheme for velocity-actuated point mass. For exponential convergence, choose $\mu \equiv 1, \alpha=\alpha_0 e^{-\lambda t}$ $\forall t \in [0,\infty)$ and for prescribed-time convergence, choose $\mu=\frac{T}{T-t}, \alpha=\alpha_0 e^{\lambda T \left(1-\frac{T}{T-t}\right)}$ $\forall t \in [0,T)$ with $\alpha_0, \lambda>0$.}
    \label{ptesvehicle}
\end{figure}

For simplicity, but without loss of generality, we assume that the nonlinear map is quadratic with diagonal Hessian matrix, i.e.,
\begin{align}
    h(x_1,x_2)={}h^*-q_{x_1}(x_1-x_1^*)^2-q_{x_2}(x_2-x_2^*)^2, \label{mapquad}
\end{align}
where $(x_1^*,x_2^*)$ is the unknown maximizer, $h^*=h(x_1^*,x_2^*)$ is the unknown maximum, and $q_{x_1}$, $q_{x_2}$ are some unknown positive constants.

Before presenting our results, let us first introduce the new coordinates
\begin{align}
    \tilde{x}_1={}&x_1-x_1^*-\alpha \sin(\omega_o t \mu), \\
    \tilde{x}_2={}&x_2-x_2^*+\alpha \cos(\omega_o t \mu), \\
    \tilde{\eta}={}&\eta-h(x_1^*,x_2^*),
\end{align}
where the signal $\eta$ is defined in \eqref{etadef}. For exponential stability, we can choose $\mu \equiv 1 $ and $\alpha$ as in \eqref{mu} for all $ t \in [0,\infty)$. For prescribed-time stability, we can choose $\mu $ and $\alpha$ as in \eqref{xidef}, \eqref{mudef}, respectively, for all $ t \in [0,T)$.
Then, we summarize the system in Fig. \ref{ptesvehicle} as follows
\begin{align}
    \frac{d}{d t}\begin{bmatrix}
    \tilde{x}_1 \\ \tilde{x}_2 \\ \tilde{\eta} \\ \alpha
    \end{bmatrix}=\begin{bmatrix}
        +k_{x_1} \mu^2 \sin(\omega_o t \mu) (y-h^*-\tilde{\eta})(1/\alpha) \\
        -k_{x_2} \mu^2 \cos(\omega_o t \mu) (y-h^*-\tilde{\eta})(1/\alpha) \\
        -\omega_h \mu^2 \tilde{\eta}+\omega_h  \mu^2 (y-h^*) \\
        -\lambda \mu^2 \alpha
    \end{bmatrix} \label{vehmatrix}
\end{align}
with the parameters chosen as
\begin{align}
    \omega_h>{}&2 \lambda, \qquad k_{x_i}>{}\lambda/q_{x_i}, \qquad i=1,2. \label{paramptsource}
\end{align}

An extension of the exponential uES result in Theorem \ref{the1} as well as the uPT-ES result in Theorem \ref{the2} to the system \eqref{vehmatrix} can be easily done. For convenience, we give the uPT-ES result for this source seeking problem below without a proof.

 \begin{theorem}
Consider the feedback system \eqref{vehmatrix} with
the parameters that satisfy \eqref{paramptsource} and with the nonlinear map of the form \eqref{mapquad}. There exists $\bar{\omega}_o$ and for any $ \omega_o> \bar{\omega}_o$,
there exists an open ball $\mathcal{B}$ centered at the point $(x_1, x_2, \eta, \alpha)=(x_1^*, x_2^*, h^*,0) = : \Upsilon$ such that  for any initial condition starting in the ball $\mathcal{B}$, the system \eqref{vehmatrix} has a unique solution and the solution converges to $\Upsilon$ in prescribed time $T$.
Hence, $y(t)$ converges to $h(x_1^*,x_2^*)$ in prescribed time $T$. Furthermore, the velocity inputs remain bounded over $[0, T)$.
 \end{theorem}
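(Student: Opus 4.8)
The plan is to reduce this source-seeking theorem to the already-established static uPT-ES result (Theorem \ref{the2}) by matching the vehicle closed loop \eqref{vehmatrix} to the canonical structure \eqref{ptclosed}, and then to supply the one genuinely new ingredient, namely boundedness of the velocity inputs on $[0,T)$. First I would introduce the dilated time $\check{\tau}$ via \eqref{dilat}--\eqref{contrac} with $q=2$ and $t_0=0$. The decisive observation is that for this choice $\check{\tau}=t\mu$, so that the chirps $\sin(\omega_o t\mu)$ and $\cos(\omega_o t\mu)$ become the constant-frequency sinusoids $\sin(\omega_o\check{\tau})$, $\cos(\omega_o\check{\tau})$. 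Using $d\check{\tau}/dt=\mu^2$ as in \eqref{dtaudt}, every row of \eqref{vehmatrix} is divided by $\mu^2$, producing a system with constant probing frequency in $\check{\tau}$ that is structurally identical to the exponential design \eqref{ffed} (with $\dot{\alpha}=-\lambda\alpha$ in the $\check{\tau}$-scale). This is exactly the setting in which averaging is justified.

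Next I would apply the unbiased scaling $\tilde{x}_{1f}=\tilde{x}_1/\alpha$, $\tilde{x}_{2f}=\tilde{x}_2/\alpha$, $\tilde{\eta}_f=\tilde{\eta}/\alpha^2$ as in \eqref{trans}. Because the map \eqref{mapquad} is quadratic, the quantity $(y-h^*)/\alpha^2=-q_{x_1}(\tilde{x}_{1f}+\sin(\omega_o\check{\tau}))^2-q_{x_2}(\tilde{x}_{2f}-\cos(\omega_o\check{\tau}))^2$ is a polynomial in the scaled states and the sinusoids with no residual $\alpha$-singularity, so the continuity and boundedness hypotheses of the averaging theorem \cite{khalil2002nonlinear} hold immediately (more cheaply than the Taylor/mean-value argument in Step 2 of the proof of Theorem \ref{the1}). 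Averaging over the common period $\Pi=2\pi/\omega_o$ with $\langle\sin^2\rangle=\langle\cos^2\rangle=1/2$, $\langle\sin\cos\rangle=\langle\sin\rangle=\langle\cos\rangle=0$ decouples the two position equations into $d\tilde{x}_{1f}^a/d\check{\tau}=(\lambda-k_{x_1}q_{x_1})\tilde{x}_{1f}^a$ and $d\tilde{x}_{2f}^a/d\check{\tau}=(\lambda-k_{x_2}q_{x_2})\tilde{x}_{2f}^a$, while the $\tilde{\eta}_f$ equation carries the stable eigenvalue $2\lambda-\omega_h$ together with a constant forcing, and $\alpha$ decays at rate $-\lambda$. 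The resulting averaged equilibrium is hyperbolic with block-triangular Jacobian whose diagonal entries $\lambda-k_{x_1}q_{x_1}$, $\lambda-k_{x_2}q_{x_2}$, $2\lambda-\omega_h$, $-\lambda$ are all negative precisely under \eqref{paramptsource}; hence, for all $\omega_o>\bar{\omega}_o$, the averaging theorem yields a unique exponentially stable $\Pi$-periodic solution in $\check{\tau}$ within $\mathcal{O}(1/\omega_o)$ of that equilibrium.

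To conclude convergence, note that $\alpha(\check{\tau})\to0$ exponentially and $\tilde{x}_{1f},\tilde{x}_{2f}$ remain bounded, so the unscaled errors $\tilde{x}_1=\alpha\tilde{x}_{1f}$, $\tilde{x}_2=\alpha\tilde{x}_{2f}$, $\tilde{\eta}=\alpha^2\tilde{\eta}_f$ tend to zero exponentially in $\check{\tau}$; since $x_i-x_i^*=\alpha(\tilde{x}_{if}\pm\text{sinusoid})$, the positions reach the source and $y\to h^*$. The time-contraction map \eqref{contrac} then sends $\check{\tau}\to\infty$ to $t\to T$, converting exponential decay in $\check{\tau}$ into convergence by the prescribed time $T$ in real time, exactly as in the closing argument of Theorem \ref{the2}.

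The one claim with no analogue in Theorems \ref{the1}--\ref{the2}, and where I expect the main obstacle, is boundedness of $v_{x_1},v_{x_2}$ on $[0,T)$. Differentiating $x_1=x_1^*+\tilde{x}_1+\alpha\sin(\omega_o t\mu)$ and using $\frac{d}{dt}(t\mu)=\mu^2$ (since $t\mu=\check{\tau}$) together with $\dot{\alpha}=-\lambda\mu^2\alpha$, one finds $v_{x_1}=\mu^2\alpha\,\Phi(\check{\tau})$, where $\Phi$ collects the quantities $(y-h^*)/\alpha^2$, $\tilde{\eta}_f$, $\sin(\omega_o\check{\tau})$, $\cos(\omega_o\check{\tau})$ and the constants $k_{x_1},\lambda,\omega_o$, all of which are bounded uniformly in $\check{\tau}$ by the exponential stability established above; an identical expression holds for $v_{x_2}$. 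It therefore suffices to bound $\mu^2\alpha$. From \eqref{mudef} with $q=2$ one has $\alpha=\alpha_0 e^{\lambda T}e^{-\lambda T\mu}$, whence $\mu^2\alpha=\alpha_0 e^{\lambda T}\mu^2 e^{-\lambda T\mu}\to0$ as $\mu\to\infty$, i.e. as $t\to T$: the exponential decay of the perturbation amplitude dominates the polynomial blow-up of the gain. Consequently $v_{x_1},v_{x_2}$ remain bounded, in fact vanish, on $[0,T)$, which is exactly the unbiasedness-induced improvement over the unbounded-control PT-ES schemes of \cite{todorovski2023practical,ctydelayheatPDETAC}. The only delicate point is verifying that $\Phi$ is genuinely bounded uniformly in $\check{\tau}$, which rests on the exponential stability proved in the averaging step.
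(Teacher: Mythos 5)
Your proposal is correct and follows essentially the same route the paper intends: the theorem is stated without proof as a direct extension of Theorems \ref{the1} and \ref{the2} (dilation $\check{\tau}=t\mu$ for $q=2$ turning the chirps into constant-frequency sinusoids, the $1/\alpha$, $1/\alpha^2$ scaling, averaging with the quadratic map, and time contraction), and your treatment of the velocity inputs via $v_{x_i}=\mu^2\alpha\,\Phi(\check{\tau})$ with $\lim_{t\to T}\mu^2(t)\alpha(t)=0$ is exactly the boundedness argument the paper itself gives immediately after the theorem. Your computations (the decoupled averaged eigenvalues $\lambda-k_{x_i}q_{x_i}$, $2\lambda-\omega_h$, $-\lambda$ under \eqref{paramptsource}, and $\alpha=\alpha_0e^{\lambda T}e^{-\lambda T\mu}$) all check out.
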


The main contribution of the developed uPT-ES technique in Fig. \ref{ptesvehicle} 
is that we achieve to drive the vehicle directly to the source $h^*$ by keeping the velocities $v_{x_1}, v_{x_2}$ bounded whereas the PT-ES technique in \cite{ctydelayheatPDETAC}
(which can be suitably modified to fit the structure shown in Figure \ref{ptesvehicle} with $\lambda=0$, $\alpha \equiv \alpha_0$ and $\mu$-function \eqref{xidef}) achieves the convergence of the vehicle to a small neighborhood the source with the velocities growing unbounded. The boundedness of the velocities $v_{x_1}, v_{x_2}$ in our design follows from the fact that the signal $y-\eta$ converges to zero in prescribed time proportionally to $\alpha$ as well as the fact that the square of the blow-up function $\mu$ is multiplied by the function $\alpha$ which decays to zero faster, resulting in boundedness and convergence of the velocities to zero, i.e.,
\begin{align}
    \lim_{t \to T} \left( \alpha(t) \mu(t)^2  \right)={}\lim_{t \to T} \left(\alpha_0 \frac{T^2}{(T-t)^2}  e^{\lambda T \left(1-\frac{T}{T-t}\right)}\right)={}0.
\end{align}

\section{Application to Source Seeking Problem} \label{applicationsection}

We consider the application of the developed ES techniques to the problem of source seeking by a velocity-actuated point mass as defined in Section \ref{sourceseeking}.
The velocity of the vehicle in Fig. \ref{ptesvehicle} is controlled by the following ES controllers with appropriate $\mu$ and $\alpha$ functions:
\begin{itemize}
    \item \textbf{Controller 1.} Nominal ES with $\mu \equiv 1$, $\alpha \equiv \alpha_0$, (i.e., $\lambda=0$). This design boils down to Fig. 1 in \cite{zhang2007extremum}, in which the vehicle asymptotically converges to a neighborhood of the source and shows steady-state oscillations around it.
    \item \textbf{Controller 2.} Exponential uES with $\mu \equiv 1$, $\alpha$-function dynamics \eqref{mu}. This design is a modified version of Fig. \ref{ESBlock}.
    \item \textbf{Controller 3.} Robust Exponential ES with $\mu \equiv 1$, $\alpha$-function dynamics \eqref{mumodif} and additional terms $\lambda \beta \sin(\omega_o t)$ in $v_{x_1}$ as well as $-\lambda \beta \cos(\omega_o t)$ in $v_{x_2}$. This design is a modified version of Fig. \ref{ESBlock} with $\alpha$-dynamics \eqref{mumodif}.
    \item \textbf{Controller 4.} uPT-ES with $\mu$-function \eqref{xidef}, $\alpha$-function \eqref{mudef}. This design is a modified version of Fig. \ref{PTESBlock} with $q=2$.    
\end{itemize}

\begin{figure}[t]
    \centering
    \includegraphics[trim=0cm 0cm 0cm 0cm, clip=true, width=\linewidth]{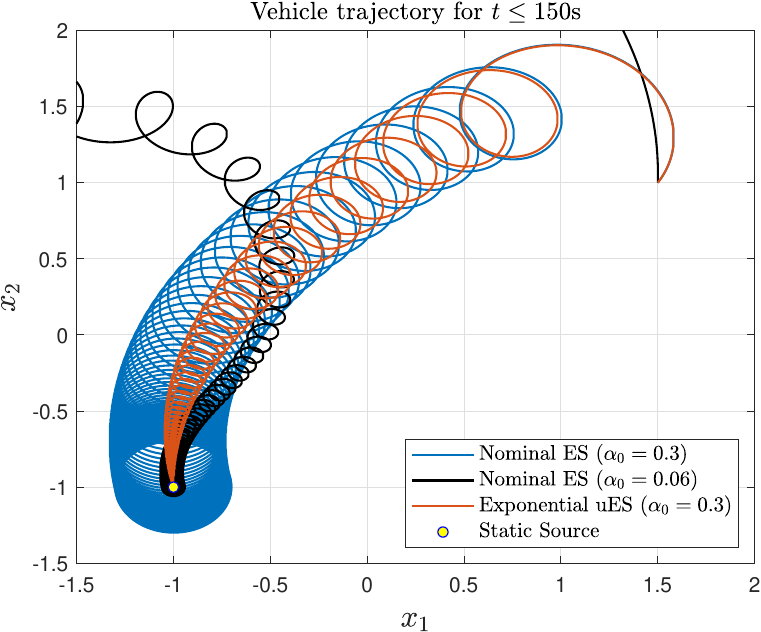}  
    \caption{Static source seeking by an autonomous vehicle \eqref{velactmass}. The nominal ES with low amplitude $\alpha_0$  approaches the source more closely but requires high initial velocity, leading to initial deviation from the source. The exponential uES \eqref{ffed}, with its exponentially decaying amplitude, avoids this issue.}
    \label{staticsourceexpfig}
\end{figure}

\begin{figure}[t]
    \centering
    \includegraphics[trim=0cm 0cm 0cm 0cm, clip=true, width=\linewidth]{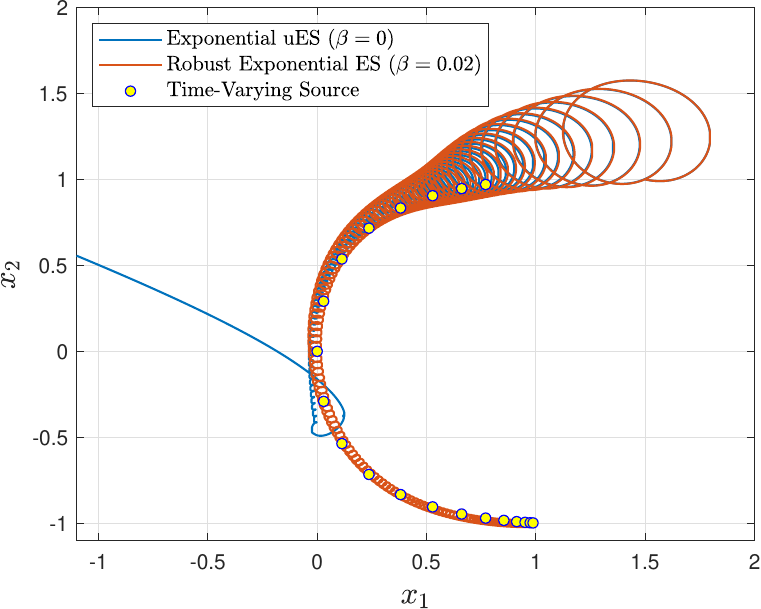}
    \caption{Time-varying source seeking by an autonomous vehicle \eqref{velactmass}. The trajectory of the source is illustrated by yellow circles at 10 second intervals. The robust exponential design \eqref{closedrobust} successfully tracks the source, owing to its amplitude that decays but does not vanish, while exponential design \eqref{ffed} fails to track.}
    \label{staticsourceexprobustfig}
\end{figure}

We now present the results of four numerical simulations to demonstrate the performance of Controllers 1-4 for the source-seeking problem. The real-time measurement is defined as $y(t) = h(x_1(t),x_2(t))$, where the function $h(\cdot)$ is described in \eqref{mapquad} and its parameters are chosen as $(x_1^*,x_2^*)=(-1,-1), h^*=1, q_{x_1}=1, q_{x_2}=0.5$.

\begin{figure}[t]
\begin{subfigure}{\linewidth}
    \centering
    \includegraphics[trim=0cm 0cm 0cm 0cm, clip=true, width=\linewidth]{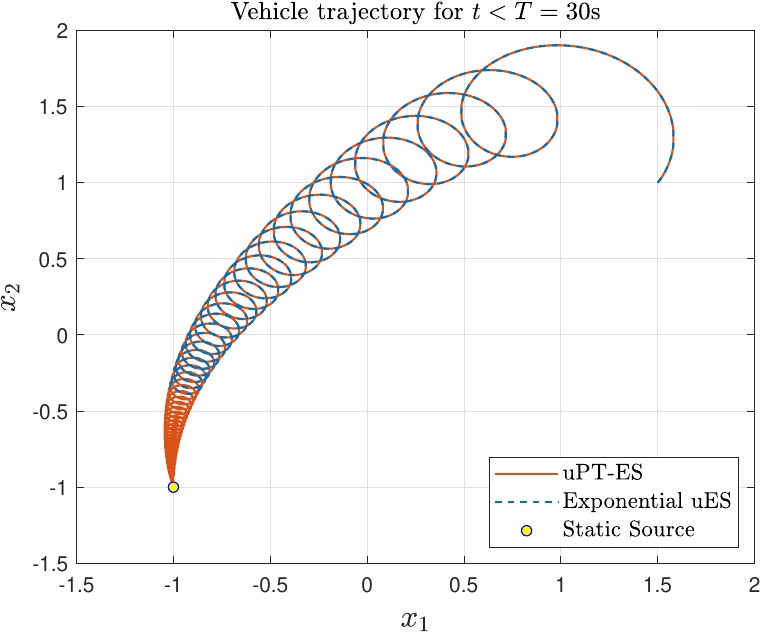}
    \caption{}
    \label{staticsourceptfiga}
\end{subfigure}
\begin{subfigure}{\linewidth}
    \centering
    \includegraphics[trim=0cm 0cm 0cm 0cm, clip=true, width=\linewidth]{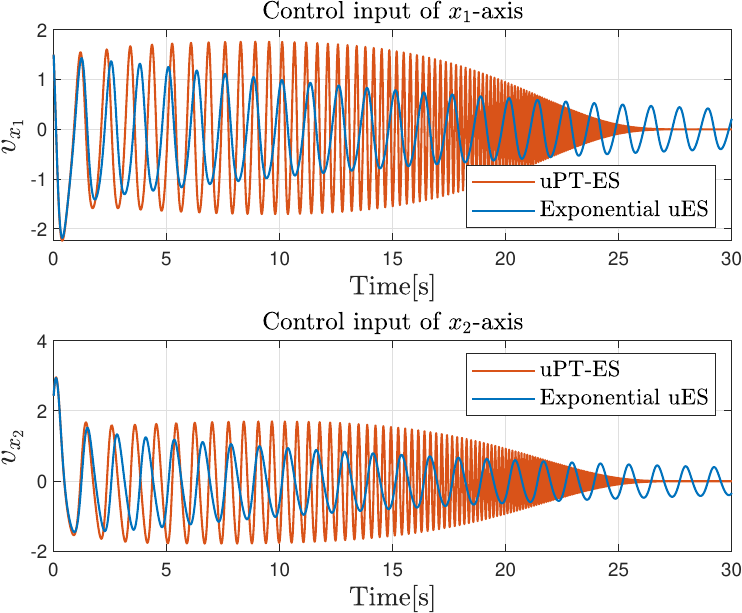}
    \caption{}
    \label{staticsourceptfigb}
\end{subfigure}
\caption{Static source seeking for $t \in [0,30)$ in seconds. (a) Both designs follow the same trajectory, but the exponential uES \eqref{ffed} falls behind the uPT-ES \eqref{ptclosed}. (b) The velocity inputs of the uPT-ES \eqref{ptclosed} exhibit more rapid changes compared to those of the exponential uES \eqref{ffed}.}
\label{staticsourceptfig}
\end{figure}

\textbf{Controller 1 and 2.} The first simulation compares the performance of the nominal ES and exponential uES for a static source. The parameters in Fig. \ref{ptesvehicle} are selected as $\omega_h=1$, $k_{x_1}=k_{x_2}=0.1, \lambda=0.045$ and $\omega_o=5$. The comparison between the nominal ES and exponential uES is shown in Fig. \ref{staticsourceexpfig}. The exponential uES exhibits exponential convergence to the source at $(-1,-1)$ with circular trajectories and exponentially decaying amplitude. On the other hand, the nominal ES with constant amplitude asymptotically converges to the vicinity of the source and shows steady-state oscillation around it. The nominal design with $\alpha_0=0.06$ converges closer to the source than the one with $\alpha_0=0.3$, but it initially moves away from the source due to its high initial velocity. Low initial velocity and perfect convergence are achieved through our design.

\textbf{Controller 2 and 3.} The second simulation examines the case of a time-varying source. The source is modeled as
\begin{align}
    x_1^*(t)={}&1-e^{-0.0003(t-70)^2}, \label{x1start} \\
    x_2^*(t)={}&-\tanh(0.03(t-70)). \label{x2start}
\end{align}
Note that the Gaussian function in \eqref{x1start} and the hyperbolic tangent function in \eqref{x2start} can be referred to as saturating activation functions. The performance of the exponential uES and the robust exponential ES for tracking the time-varying source is shown in Fig. \ref{staticsourceexprobustfig}. The parameters are set as $\beta=0.02, k_{x_1}=0.2, k_{x_2}=0.3$ with $\alpha_0, \omega_h, \omega_o$ being the same as the first simulation. The robust design demonstrates robustness to non-stationary sources with adjustable amplitude, while the exponential uES fails to achieve convergence.

\begin{figure}[t]
\begin{subfigure}{\linewidth}
    \centering
    \includegraphics[trim=0cm 0cm 0cm 0cm, clip=true, width=\linewidth]{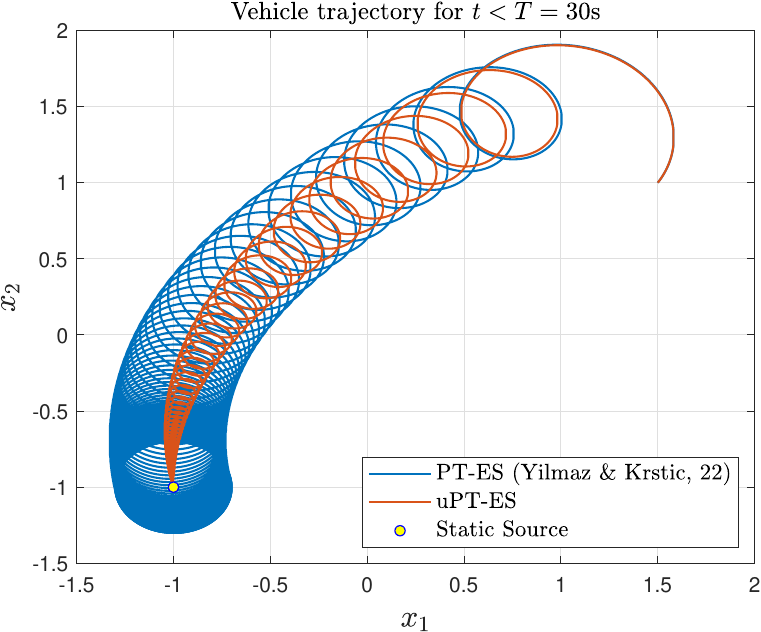}
    \caption{}
    \label{ptcomptodusa}
\end{subfigure}
\begin{subfigure}{\linewidth}
    \centering
    \includegraphics[trim=0cm 0cm 0cm 0cm, clip=true, width=\linewidth]{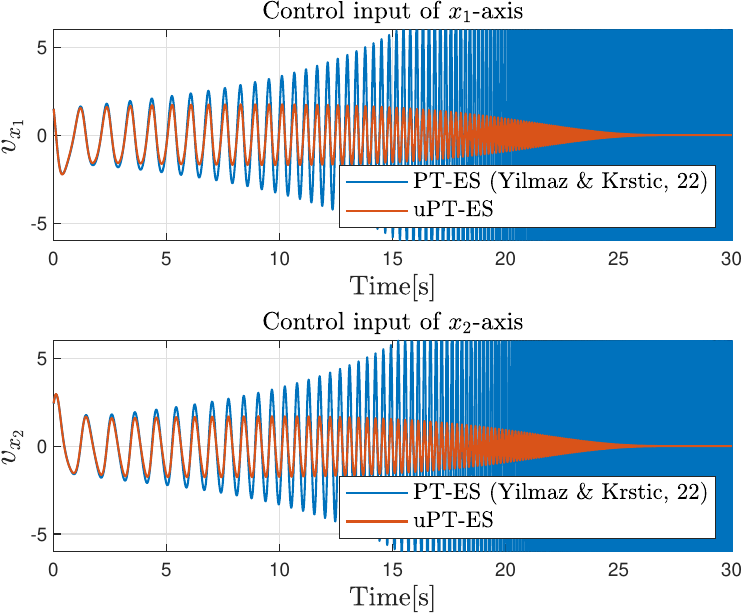}
    \caption{}
    \label{ptcomptodusb}
\end{subfigure}
\caption{
Static source seeking for $t \in [0,30)$ in seconds. (a) The convergence of our uPT-ES \eqref{ptclosed} is compared with the one of delay-free PT-ES in \cite{ctydelayheatPDETAC},
which is implemented on the 2D vehicle illustrated in Fig. \ref{ptesvehicle} by choosing $\lambda=0$, $\alpha\equiv\alpha_0$, and $\mu$ as in \eqref{xidef}.
(b) The velocity inputs in our design are kept bounded, while the inputs in \cite{ctydelayheatPDETAC} grow unbounded.}
\label{ptcomptodus}
\end{figure}

\textbf{Controller 2 and 4.} The third simulation considers the static source problem and implements both the exponential uES design and uPT-ES design for comparison. The final time is set to $T=30s$, and the rest of the parameters are chosen as in the first simulation. As shown in Fig. \ref{staticsourceptfiga}, both designs track the same trajectory, but the exponential uES cannot reach the source in prescribed time.
This is because the uPT-ES design performs more rapid velocity changes,
resulting in bounded but high acceleration in the vehicle, as seen in Fig. \ref{staticsourceptfigb}.

\textbf{Controller 4 and \cite{ctydelayheatPDETAC}.} In our four and final simulation, 
we illustrate the difference between these two different prescribed-time ES designs. The delay-free design in \cite{ctydelayheatPDETAC} can be implemented into Fig. \ref{ptesvehicle} by choosing $\lambda=0, \alpha \equiv \alpha_0$, and $\mu$ as in  \eqref{xidef}.
The final time is set to $T=30s$, and the rest of the parameters are the same as in the first simulation. 
The PT-ES in \cite{ctydelayheatPDETAC} basically improves the convergence of the nominal ES as demonstrated in Fig. \ref{ptcomptodusa}. However, we can see in Fig. \ref{ptcomptodusb} that this comes with the cost of velocities growing unbounded. Our design, on the other hand, achieves unbiased convergence in prescribed time with bounded velocities.

\section{Conclusion} \label{concsection}
In this paper, we address the issue of steady-state oscillations in classical ES. We develop accelerated ES algorithms that not only eliminate the steady-state oscillations, but also achieves unbiased convergence to the extremum exponentially and in prescribed time by employing proper time-varying functions in the perturbation and demodulation stage of the ES loop. For tracking non-stationary optima, we introduce a robust ES scheme with user-adjustable oscillations that gradually decrease but remains non-zero. We evaluate the performance of our ES algorithms on a source-seeking problem. With bounded velocity inputs, our uPT-ES design accurately locates the source, while the delay-free PT-ES algorithm in \cite{ctydelayheatPDETAC} converges to a neighborhood of the source with unbounded input growth. In our  future works,  extension of our classical averaging-based ES results to the ES based on Lie-bracket approximation will be developed.


\appendix
\section{Proof of Theorem \ref{theoremrobust}}    \label{theoremappendix}

Let us proceed through the proof step by step.

\textbf{Step 1: State transformation.}
Consider the following transformations
\begin{align}
     \tilde{\theta}_f={}& \frac{1}{\alpha} \tilde{\theta}, \qquad
     \hat{G}_f={} \frac{1}{\alpha} \hat{G},  \qquad 
     \tilde{\eta}_f={} \frac{1}{\alpha^2} \tilde{\eta}, \label{transforrobust}
\end{align}
which transform \eqref{closedrobust} to the following system
\begin{align}
    &\frac{d}{d t}\begin{bmatrix}
    \tilde{\theta}_f & \hat{G}_f & \tilde{\eta}_f & \alpha
    \end{bmatrix}^T \nonumber \\
    &=\begin{bmatrix}
        \lambda \tilde{\theta}_f-\lambda \frac{\beta }{\alpha}\tilde{\theta}_f  +K \hat{G}_f \\ (\lambda-{\omega}_l-\lambda  \frac{\beta}{\alpha})\hat{G}_f +{\omega}_l\left[\nu(\tilde{\theta}_f \alpha+{S}(t)\alpha)-\tilde{\eta}_f \alpha^2 \right] \frac{{M}(t)}{\alpha^2}  \\
        (2\lambda-{\omega}_h-2\lambda  \frac{\beta}{\alpha}) \tilde{\eta}_f+{\omega}_h \frac{1}{\alpha^2} \nu(\tilde{\theta}_f \alpha+{S}(t)\alpha) \\
        -{\lambda} \alpha+\lambda \beta
    \end{bmatrix}, \label{robusttrans}
\end{align}
where $\nu$ is as defined in \eqref{nudefined} and satisfies \eqref{nupartials}.

\textbf{Step 2: Averaging operation.} 
We rewrite the system \eqref{robusttrans} in the time scale $\tau=\omega t$ as follows
\begin{align}
    &\frac{d}{d \tau}\begin{bmatrix}
    \tilde{\theta}_f & \hat{G}_f & \tilde{\eta}_f & \alpha
    \end{bmatrix}^T \nonumber \\
    &=\frac{1}{\omega} \begin{bmatrix}
        {\lambda} \tilde{\theta}_f-{\lambda} \frac{\beta }{\alpha}\tilde{\theta}_f+{K} \hat{G}_f \\ ({\lambda}-{\omega}_l-{\lambda}  \frac{\beta}{\alpha}) \hat{G}_f +{\omega}_l \left[\nu(\tilde{\theta}_f \alpha+\bar{S}(\tau)\alpha)-\tilde{\eta}_f \alpha^2 \right] \frac{\bar{M}(\tau)}{\alpha^2}  \\
        (2{\lambda}-{\omega}_h-2{\lambda}  \frac{\beta}{\alpha}) \tilde{\eta}_f +{\omega}_h \frac{1}{\alpha^2} \nu(\tilde{\theta}_f \alpha+\bar{S}(\tau)\alpha) \\
        -{\lambda} \alpha+{\lambda} \beta
    \end{bmatrix}. \label{robustdeltatrans}
\end{align}
where $\bar{S}(\tau)=S(\tau/\omega), \bar{M}(\tau)=M(\tau/\omega)$. The average of the system \eqref{robustdeltatrans} over the period $\Pi$ defined in \eqref{Pidef} is given by
\begin{align}
    \frac{d}{d \tau}\begin{bmatrix}
    \tilde{\theta}_f^a \\ \hat{G}_f^a  \\ \tilde{\eta}_f^a 
    \\ \alpha^a
    \end{bmatrix}={}&\frac{1}{\omega} \begin{bmatrix}
        ({\lambda}-{\lambda} \frac{\beta }{\alpha^a})\tilde{\theta}_f^a+{K} \hat{G}_f^a \\ ({\lambda}-{\omega}_l-{\lambda}  \frac{\beta}{\alpha^a}) \hat{G}_f^a \\
        (2{\lambda}-{\omega}_h-2{\lambda}  \frac{\beta}{\alpha^a}) \tilde{\eta}_f^a  \\
        -{\lambda} \alpha^a+\lambda \beta
    \end{bmatrix}  \nonumber \\
    +& \frac{1}{\omega}  \begin{bmatrix} 0 \\ {\omega}_l \frac{1}{\Pi} \int_0^{\Pi} \nu(\tilde{\theta}_f^a \alpha^a+\bar{S}(\sigma)\alpha^a)  \frac{\bar{M}(\sigma)}{(\alpha^a)^2} d\sigma \\ {\omega}_h \frac{1}{\Pi} \int_0^{\Pi}  \nu(\tilde{\theta}_f^a \alpha^a+\bar{S}(\sigma)\alpha^a) \frac{1}{(\alpha^a)^2} d\sigma \\ 0 \end{bmatrix}. \label{averagerobust}
\end{align}
It follows from \eqref{averagerobust} that the average equilibrium denoted as $\begin{bmatrix}
\tilde{\theta}_f^{a,e} & \hat{G}_f^{a,e} & \tilde{\eta}_f^{a,e} & \alpha^{a,e}
\end{bmatrix}^T$ satisfies
\begin{align}
    &\alpha^{a,e}={}\beta, \label{mueqrob} \\
    &\hat{G}_f^{a,e}={}0, \label{Gfeqrob} \\
    &\frac{{\omega}_l}{\Pi} \int_0^{\Pi} \nu(\tilde{\theta}_f^{a,e} \beta+\bar{S}(\sigma)\beta)  \frac{\bar{M}(\sigma)}{\beta^2} d\sigma={}0,  \label{nuMfeqrob} \\
   &\frac{1}{\Pi} \int_0^{\Pi} \nu(\tilde{\theta}_f^{a,e} \beta+\bar{S}(\sigma)\beta)  \frac{1}{\beta^2} d\sigma={} \tilde{\eta}_f^{a,e}. \label{aveqsatisfy}
\end{align}
Let us postulate the $i$th element $\tilde{\theta}_{f_i}^{a,e}$ of $\tilde{\theta}_{f}^{a,e}$ in the following form
\begin{align}
    \tilde{\theta}_{f_i}^{a,e}={}&\sum_{j=1}^n b_j^i a_j+\beta \sum_{j=1}^n \sum_{k \geq j }^n c_{j,k}^i a_j a_k+\mathcal{O}(\beta^2 |a^3|), \label{thetapost}
\end{align}
where $b_j^i$ and $c^i_{j,k}$ are real numbers, substitute \eqref{thetapost} into \eqref{nuMfeqrob}, perform a Taylor series expansion of $\nu$ as in \eqref{nutaylor} and equate the like powers of $a_j$. Then, we obtain $b_j^i=0$ $\forall i,j \in \{1,2\dots,n\}$, $c_{j,k}^i=0$ $\forall i,j, k \in \{1,2,\dots,n\}$ such that $j \neq k$ as well as
\begin{align}
    \begin{bmatrix}
    c_{j,j}^1 \\ \vdots \\ c_{j,j}^{i-1} \\ c_{j,j}^i  \\ c_{j,j}^{i+1} \\ \vdots \\ c_{j,j}^n
    \end{bmatrix}={}-H^{-1}  \begin{bmatrix}
    \frac{1}{4} \frac{\partial^3 \nu}{\partial z_1 \partial z_j^2}(0) \\ 
    \vdots \\
    \frac{1}{4} \frac{\partial^3 \nu}{\partial z_{j-1} \partial z_{j}^2}(0) \\ \frac{1}{8} \frac{\partial^3 \nu}{\partial z_j^3}(0) \\ \frac{1}{4}\frac{\partial^3 \nu}{\partial z_j^2 \partial z_{j+1}}(0) \\ \vdots \\ \frac{1}{4} \frac{\partial^3 \nu}{\partial z_j^2 \partial z_{n}}(0)
    \end{bmatrix}.
\end{align}
Following the same methodology, we compute $\tilde{\eta}_f^{a,e}$ in view \eqref{nutaylor}, \eqref{aveqsatisfy}, \eqref{thetapost}. Then, we find the equilibrium of the average system \eqref{averagerobust} as follows
\begin{align}
    \tilde{\theta}_{f_i}^{a,e}={}&\beta \sum_{j=1}^n c_{j,j}^i a_j^2+\mathcal{O}(\beta^2 |a|^3), \label{theequrob} \\
    \tilde{\eta}_f^{a,e}={}&\frac{1}{4} \sum_{i=1}^n H_{i,i} a_i^2+\mathcal{O}(\beta^2 |a|^4) \label{etaqurob}
\end{align}
together with \eqref{mueqrob} and \eqref{Gfeqrob}. 

\textbf{Step 3: Stability analysis.} The Jacobian of the average system \eqref{averagerobust} at  equilibrium is given by
\begin{align}
    &J_f^a \nonumber \\
    &=\frac{1}{\omega} \begin{bmatrix} 0_{n \times n} & {K} & 0_{n \times 1} & 0_{n \times 1}\\  \frac{{\omega}_l}{\Pi}  \int_0^{\Pi} \frac{\partial \left(   \frac{\nu\bar{M}}{(\alpha^{a})^2}  \right)}{\partial \tilde{\theta}_f}  d\sigma
    & -{\omega}_l I_{n \times n} & 0_{n \times 1} & \frac{{\omega}_l}{\Pi}  \int_0^{\Pi} \frac{\partial \left( \frac{\nu\bar{M}}{(\alpha^{a})^2}\right)}{\partial \alpha^a}  d\sigma   \\ \frac{{\omega}_h}{\Pi}  \int_0^{\Pi} \frac{\partial \left(   \frac{\nu}{(\alpha^{a})^2}  \right)}{\partial \tilde{\theta}_f}  d\sigma & 0_{1 \times n} & -{\omega}_h & \frac{{\omega}_h}{\Pi}  \int_0^{\Pi} \frac{\partial \left( \frac{\nu}{(\alpha^{a})^2}\right)}{\partial \alpha^a}  d\sigma \\ 0_{1 \times n} & 0_{1 \times n} & 0 & -{\lambda} \end{bmatrix}. \label{jacobrobust}
\end{align}
Considering the structure of \eqref{jacobrobust}, we get that it is Hurwitz if and only if the following inequality is satisfied 
\begin{align}
    \frac{{\omega}_l}{\Pi} \left( \int_0^{\Pi} \frac{\bar{M}(\sigma)}{(\alpha^{a,e})^2}  \frac{\partial }{\partial \tilde{\theta}_{f}}  \nu(\tilde{\theta}_f^{a,e} \alpha^{a,e}+\bar{S}(\sigma)\alpha^{a,e})  d\sigma \right)<0. \label{ineqrobust}
\end{align}
By performing a Taylor expansion of $\nu$ as in \eqref{nutaylor}, we get that \eqref{ineqrobust} is equal to ${\omega}_l H+[\mathcal{O}(\beta|a|)]_{n \times n}$. Let us define 
\begin{align}
    \mathcal{A}={}\begin{bmatrix} 0_{n \times n} & {K} \\  {\omega}_l H+[\mathcal{O}(\beta|a|)]_{n \times n}
    & -{\omega}_l I_{n \times n}\end{bmatrix},
\end{align}
which corresponds to $2n \times 2n$ submatrix in the upper left corner of \eqref{jacobrobust}. Then, we compute 
\begin{align}
    \text{det}(\lambda_{\mathcal{A}} I_{2n \times 2n}-(1/\omega) \mathcal{A} )={}&\text{det}\big((\lambda_{\mathcal{A}}^2+{\omega}_l (1/\omega) \lambda_{\mathcal{A}})I_{n \times n} \nonumber \\
    &-{\omega}_l (1/\omega^2) {K} H \nonumber \\
    &+[\mathcal{O}(\beta  (1/\omega^2) | a|)]_{n \times n}\big),
\end{align}
which, in view of $H < 0$, proves that $J_f^a$ is Hurwitz for 
sufficiently small $\beta |a|$. Then, based on the averaging theorem \cite{khalil2002nonlinear}, we show that there exist $\bar{\omega}, \bar{a}>0$ such that for all $\omega > \bar{\omega}$ and $\beta  |a| \in (0, \bar{a})$, the system has a unique exponentially stable periodic solution $(\tilde{\theta}_f^{\Pi}(\tau), \mu_f^{\Pi}(\tau), \tilde{\eta}_f^{\Pi}(\tau), \alpha^{\Pi}(\tau))$ of period $\Pi$ and this solution satisfies
\begin{align}
    \left|\tilde{\theta}_{f_i}^{\Pi}(\tau)-\beta \sum_{j=1}^n c_{j,j}^i a_j^2 \right| \leq {}&\mathcal{O}(1/\omega+\beta^2 |a|^3 ), \\
    \left| \hat{G}_f^{\Pi} (\tau) \right|  \leq {}&\mathcal{O}(1/\omega), \\
    \left| \tilde{\eta}_f^{\Pi}(\tau)-\frac{1}{4} \sum_{i=1}^n H_{i,i} a_i^2 \right| \leq {}& \mathcal{O}(1/\omega+\beta^2 |a|^4 ), \\
    \left| \alpha^{\Pi}(\tau)-\beta \right| \leq {}&\mathcal{O}(1/\omega).
\end{align}
In other words, all solutions $(\tilde{\theta}_f(\tau), \hat{G}_f(\tau),  \tilde{\eta}_f(\tau), \alpha(\tau))$ exponentially converge to a small neighborhood of the origin. The signal $\tilde{\theta}_f(\tau)$, in particular, converges to an $\mathcal{O}(1/\omega+\beta |a|^2)$-neighborhood of the origin.

\textbf{Step 4: Convergence to extremum.} Considering the results in Step 3 and recalling from \eqref{transforrobust} and Fig. \ref{ESBlock} with the modified $\alpha$-dynamics \eqref{mumodif} that 
\begin{align}
    \theta(t)={}&\alpha(t) \tilde{\theta}_f(t)+\theta^*+\alpha(t) S(t), \label{thetaconvrob}
\end{align}
we conclude the exponential convergence of $\theta(t)$ to an $\mathcal{O}(\beta/\omega+\beta |a| )$-neighborhood of $\theta^*$. Hence, performing a Taylor series expansion of $\nu$ as in \eqref{nutaylor} and noting \eqref{thetaconvrob}, we conclude the convergence of the output $y(t)$ to an $\mathcal{O}((\beta/\omega)^2+\beta^2 |a|^2)$-neighborhood of $h(\theta^*)$ and complete the proof of Theorem \ref{theoremrobust}.  $\hfill \blacksquare$

\end{document}